\numberwithin{figure}{section}
\theoremstyle{plain}
\newtheorem{thm}{\protect\theoremname}[section]
\newtheorem{conjecture}[thm]{\protect\conjecturename}
\theoremstyle{definition}
\newtheorem{defn}[thm]{\protect\definitionname}
\newtheorem{example}[thm]{\protect\examplename}
\theoremstyle{plain}
\newtheorem{lem}[thm]{\protect\lemmaname}
\theoremstyle{definition}
\newtheorem{problem}[thm]{\protect\problemname}
\providecommand{\definitionname}{Definition}
\providecommand{\examplename}{Example}
\providecommand{\lemmaname}{Lemma}
\providecommand{\theoremname}{Theorem}
\providecommand{\conjecturename}{Conjecture}
\providecommand{\problemname}{Problem}
\begin{document}

\title{Even spheres as joint spectra of matrix models}

\author{Alexander Cerjan}
\email{awcerja@sandia.gov}
\address{Center for Integrated Nanotechnologies, Sandia National Laboratories, Albuquerque, New Mexico 87185, USA}

\author{Terry A.\ Loring}
\email{tloring@unm.edu}
\address{Department of Mathematics and Statistics, University of New Mexico, Albuquerque, New Mexico 87131, USA}

\date{\today}

\keywords{spectrum, noncommmutative, $K$-theory}

\begin{abstract}
The Clifford spectrum is a form of joint spectrum for noncommuting
matrices. This theory has been applied in photonics, condensed
matter and string theory.  In applications, the Clifford spectrum can be efficiently approximated using numerical methods, but this only is possible in low dimensional example.
Here we examine the higher-dimensional spheres that can arise from
theoretical examples.   We also describe a constuctive method to generate  five real symmetric almost commuting matrices that have a $K$-theoretical obstruction to being close to commuting matrices.  For this, we look to matrix
models of topological electric circuits.
\end{abstract}

\maketitle

\section{Multivariable spectrum}

There are many in-equivalent ways to define a joint spectrum of a
$d$-tuple $(A_{1},\dots,A_{d})$ of Hermitian $n$-by-$n$ matrices. 
For example, there is the monogenic spectrum \cite{Jefferies_McIn_Weyl_Calc}
which has a close connection with a noncommutative functional calculus.
Here we are concerned with the Clifford spectrum, as defined in \cite{Kisil_monogenic_func_calc},
which is useful in many parts of physics such as high-energy physics
\cite{berenstein2012matrix_embeddings,deBadyn_Karczmarek2015emergent_geometry},
condensed matter physics \cite{cheng2023revealing,franca2023obstructions,liu2018KitaevChainsJosephJunct,schuba2022Localizer_semimetals} and
photonic crystals \cite{CerjanLoring2022TopoPhtonics,dixon2023PhotonicHeterostrcutre}. These papers
make use of the connection between the Clifford spectrum and $K$-theory
\cite{Doll_schuba_Z2_flows_skew_localizer,LoringSchuba_even_dim_localizer}.
Here, we begin to investigate the question of what higher dimensional spaces
can occur as the Clifford spectrum of a $d$-tuple $(A_{1},\dots,A_{d})$
of Hermitian matrices and calculate the $K$-theory associated with these spaces. 

We make extensive use of the complex Clifford algebras $\mathcal{C}\ell(d)$.
We tend to think of $\mathcal{C}\ell(n)$ as the universal unital
$C^{*}$-algebra for generators $e_{1},\dots,e_{d}$ subject to relations
\begin{align*}
e_{j}^{*} & =e_{j},\quad(j=1,\dots,d)\\
e_{j}^{2} & =1,\quad(j=1,\dots,d)\\
e_{j}e_{k} & =-e_{k}e_{j},\quad(j\neq k)
\end{align*}
and so refer to $(e_{1},\dots,e_{d})$ as the \emph{universal Clifford generators}.
We also consider finite matrices $(\gamma_{1},\dots,\gamma_{d})$
that satisfy the above relations, and call such a $d$-tuple a \emph{representation of
the Clifford relations}. 

\begin{defn}
Suppose $A_{1},\dots,A_{d}$ are Hermitian matrices, all in $\boldsymbol{M}_{n}(\mathbb{C})$.
The \emph{Clifford spectrum} $\Lambda(A_{1},\dots,A_{d})$ is the set of
$\boldsymbol{\lambda}$ in $\mathbb{R}^{d}$ such that $L_{\boldsymbol{\lambda}}(A_{1},\dots,A_{d})$
is a non-invertible element of $\boldsymbol{M}_{n}(\mathbb{C})\otimes\mathcal{C}\ell(d)$,
where
\begin{equation*}
L_{\boldsymbol{\lambda}}(A_{1},\dots,A_{d})=\sum \left(A_{j} - \lambda_j\right)\otimes e_{j}.
\end{equation*}
\end{defn}

The element $L_{\boldsymbol{\lambda}}(A_{1},\dots,A_{d})$ is referred
to as the \emph{spectral localizer} in physics \cite{CerjanLoring2022TopoPhtonics,LoringSchuba_even_dim_localizer}.
Kisil takes closure when defining the Clifford spectrum, but we find
this is not needed as $\Lambda(A_{1},\dots,A_{d})$ as defined above
is automatically closed. This is because it is the zero-set of the
scalar valued function
\begin{equation*}
\mu_{\boldsymbol{\lambda}}^{C}(A_{1},\dots,A_{d})=s_{\min}(L_{\boldsymbol{\lambda}}(A_{1},\dots,A_{d}))
\end{equation*}
where we use $s_{\min}$ to indicate the smallest singular value of
a matrix. The function $\boldsymbol{\lambda}\mapsto\mu_{\boldsymbol{\lambda}}^{C}(A_{1},\dots,A_{d})$
is the Clifford pseudospectrum introduced in \cite{loring2015Pseudospectra_topo_ins}.
This function is continuous, even Lipschitz \cite{loring2015Pseudospectra_topo_ins},
so its zero-set is already closed.

The term Clifford spectrum is used in various incompatible ways in the mathematics literature \cite{Arveson_Dirac_operator,KisilRamirex_Cliff_func_calc}.  There seems to be no single notion of joint spectrum that is best for non-commuting matrices.  What we are calling the Clifford spectrum behaves in mathematically odd ways with respect to functional calculus, but it links very well to topological invariants and bound states in physics.

For many reasons, including minimizing computer memory use, we want to represent the $e_j$ by the smallest matrices possible.  It turns out that having a faithful representation of the complex Clifford algebra is not needed.  In fact, we can use any nontrival representation, with irreducible representations generally preferred.

\begin{lem}
Suppose $\gamma_{1},\dots,\gamma_{d}$ are in $\boldsymbol{M}_{r}(\mathbb{C})$
with $r>0$ and these form a representation of the Clifford relations.
If $A_{1},\dots,A_{d}$ are Hermitian matrices then
\begin{equation*}
s_{\min}\left(\sum_{j=1}^{d}\left(A_{j}-\lambda_{j}\right)\otimes\gamma_{j}\right)=s_{\min}\left(\sum_{j=1}^{d}\left(A_{j}-\lambda_{j}\right)\otimes e_{j}\right).
\end{equation*}
\end{lem}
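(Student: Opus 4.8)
The plan is to reduce the statement to the structure theory of the finite–dimensional $C^{*}$-algebra $\mathcal{C}\ell(d)$, combined with three elementary invariance properties of the smallest singular value: it is unchanged under multiplication by unitaries on either side (hence under unitary conjugation of a representation), it is unchanged under passing from $M$ to $-M$, and it satisfies $s_{\min}(M\oplus N)=\min(s_{\min}(M),s_{\min}(N))$ for a block-diagonal matrix. First I would absorb the shift, replacing each $A_{j}$ by the Hermitian matrix $A_{j}-\lambda_{j}1_{n}$, so that it suffices to prove $s_{\min}\!\left(\sum_{j}A_{j}\otimes\gamma_{j}\right)=s_{\min}\!\left(\sum_{j}A_{j}\otimes e_{j}\right)$ for arbitrary Hermitian $A_{1},\dots,A_{d}$.

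Next I would use universality: the assignment $e_{j}\mapsto\gamma_{j}$ extends to a unital $*$-homomorphism $\pi\colon\mathcal{C}\ell(d)\to\boldsymbol{M}_{r}(\mathbb{C})$, and since $\mathcal{C}\ell(d)$ is a finite-dimensional $C^{*}$-algebra, the representation on $\mathbb{C}^{r}$ decomposes, after a unitary change of basis, as a direct sum of irreducible representations, with at least one summand present because $r>0$. Applying $\mathrm{id}_{\boldsymbol{M}_{n}}\otimes(-)$ to this decomposition and invoking the direct-sum and conjugation properties above reduces the claim to the single assertion that $s_{\min}\!\left(\sum_{j}A_{j}\otimes\rho(e_{j})\right)$ takes the \emph{same} value for every irreducible representation $\rho$ of $\mathcal{C}\ell(d)$. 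The right-hand side $s_{\min}\!\left(\sum_{j}A_{j}\otimes e_{j}\right)$ is then covered by the identical argument, viewing $\mathcal{C}\ell(d)$ as the direct sum of its matrix blocks, i.e.\ of one copy of each irreducible.

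For $d$ even there is nothing left to prove: $\mathcal{C}\ell(d)\cong\boldsymbol{M}_{2^{d/2}}(\mathbb{C})$ is simple and has a unique irreducible representation. The real content is the case $d$ odd, where $\mathcal{C}\ell(d)$ has two inequivalent irreducible representations $\rho^{+}$ and $\rho^{-}$, distinguished by the scalar $\pm1$ they assign to the self-adjoint central unitary $\omega=c\,e_{1}e_{2}\cdots e_{d}$, with $c\in\{1,i\}$ chosen so that $\omega^{2}=1$. Here I would note that $e_{j}\mapsto-\rho^{+}(e_{j})$ again satisfies the Clifford relations and, because $d$ is odd, sends $\omega$ to $-\rho^{+}(\omega)=-1$; hence it is unitarily equivalent to $\rho^{-}$. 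Consequently $\sum_{j}A_{j}\otimes\rho^{-}(e_{j})$ is unitarily equivalent to $-\sum_{j}A_{j}\otimes\rho^{+}(e_{j})$, and the invariance of $s_{\min}$ under conjugation and negation yields $s_{\min}\!\left(\sum_{j}A_{j}\otimes\rho^{-}(e_{j})\right)=s_{\min}\!\left(\sum_{j}A_{j}\otimes\rho^{+}(e_{j})\right)$, which is exactly what is needed. The step I expect to require the most care is precisely this identification $-\rho^{+}\simeq\rho^{-}$ in odd dimensions, since one must correctly track the sign picked up by $\omega$ under $e_{j}\mapsto-e_{j}$ and the normalization constant $c$; everything else is routine bookkeeping with direct sums and unitaries.
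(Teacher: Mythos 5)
Your proposal is correct and follows essentially the same route as the paper: decompose the representation $e_j\mapsto\gamma_j$ into irreducibles, note that for even $d$ the irreducible is unique, and for odd $d$ the two irreducibles are related by $e_j\mapsto-e_j$, so the corresponding localizers differ only by an overall sign, which leaves $s_{\min}$ (and the direct-sum minimum) unchanged. Your extra care in verifying $-\rho^{+}\simeq\rho^{-}$ via the central element $\omega$ just makes explicit what the paper asserts when it names the two fundamental representations $\alpha_j$ and $-\alpha_j$.
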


\begin{proof}
When $d$ is even, $\mathcal{C}\ell(d)$ is isomorphic to $\boldsymbol{M}_{2^{d/2}}(\mathbb{C})$.
Up to unitary equivalence, the only option for the $\gamma_{j}$ is  
a direct sum of copies of the $e_{j}$. This means that, if we ignore multiplicity,
the spectrum of $\sum\left(A_{j}-\lambda_{j}\right)\otimes\gamma_{j}$
and $\sum\left(A_{j}-\lambda_{j}\right)\otimes e_{j}$ will be the
same.

When $d$ is odd, $\mathcal{C}\ell(d)$ is isomorphic to $\boldsymbol{M}_{m}(\mathbb{C})\oplus\boldsymbol{M}_{m}(\mathbb{C})$
with $m=2^{(d-1)/2}$. We have in this case two fundamental representations
of the Clifford relations, 
$
\alpha_{1},\dots,\alpha_{d}
$
and
$
-\alpha_{1},\dots,-\alpha_{d}
$
in $\text{M}_{m}(\mathbb{C})$. There is a unitary $Q$ so that 
\begin{equation*}
\gamma_{j}=Q\left[\begin{array}{cc}
\alpha_{j}\otimes I_{p} & 0\\
0 & -\alpha_{j}\otimes I_{q}
\end{array}\right]Q^{*}
\end{equation*}
where at least one of $p$ and $q$ is positive since $p+q=r$. In
this case, the spectrum of $\sum\left(A_{j}-\lambda_{j}\right)\otimes\gamma_{j}$
and $\sum\left(A_{j}-\lambda_{j}\right)\otimes e_{j}$ may differ,
but these will have the same singular values since 
\begin{equation*}
\sum\left(A_{j}-\lambda_{j}\right)\otimes(-\alpha_{j})=-\sum\left(A_{j}-\lambda_{j}\right)\otimes\alpha_{j}.
\end{equation*}
\end{proof}

There are precious few noncommutative examples with $d\geq 2$ where we can exactly calculate, by hand, the Clifford spectrum.   When $d=2$ the Clifford spectrum of $(A_1,A_2)$ is basically the same as the ordinary spectrum of $A_1 + iA_2$, so this case is understood.  More precisely, one  can prove  \cite{DeBonisLorSver_joint_spectrum} that $(x,y)$ is in the Clifford spectrum of the pair exactly when $x+iy$ is in the ordinary spectrum of $A_1 + i A_2$. 
Kisil \cite{Kisil_monogenic_func_calc} finds an example of three 2-by-2 matrices whose Clifford spectrum is a 2-sphere.  See
 \cite[\S IV]{berenstein2012matrix_embeddings} for a proof that the three matrices  that generate a fuzzy sphere also have Clifford spectrum a 2-sphere.  In \cite[\S 3.1]{sykora2016fuzzy_space_kit} more examples of three 2-by-2 matrices are examined, where the Clifford spectrum can be one sphere or two spheres, possibly touching.  Utilizing computer algebra systems one can get more examples \cite{DeBonisLorSver_joint_spectrum,sykora2016fuzzy_space_kit}, but these are still limited to examining only relatively small $d$-tuples of small matrices.
These examples showed it possible to have the Clifford spectrum for four Hermitian matrices to be homeomorphic to a torus or a three-sphere.

In many applications, for example in \cite{CerjanLoring2022TopoPhtonics,franca2023obstructions,liu2018KitaevChainsJosephJunct},
it suffices to use numerical computer methods to estimate the function 
$\boldsymbol{\lambda}\mapsto\mu_{\boldsymbol{\lambda}}^{C}(A_{1},\dots,A_{d})$.
Since small perturbations of a function can drastically change its
zero-set, this does not really help understand the structure of the
Clifford spectrum.

In this work, by systematically exploring what can lead to a symmetry in the Clifford
spectrum, we are able to make explicit calculations in examples where the Clifford
spectrum is any even-dimensional sphere. Later in the paper we
resort to numerical calculations inspired from physics to find
an explicit example of five almost commuting real symmetric matrices
that have a $K$-theoretical obstruction keeping them far from commuting
real symmetric matrices. This somewhat settles the mystery raised in
\cite{BoersLorRuiz_Pictures_K-theory}, where $KK$-theory and $E$-theory were used to show that such matrices must exist,
but with no hint of how to find these matrices.

\section{Symmetries in the Clifford Spectrum \label{sec:symmetries}}

Suppose $\gamma_{1},\dots,\gamma_{d}$ form a representation of the
Clifford relations. If $U=[u_{ij}]\in O(d)$ is a real orthogonal
matrix we get another representation of the Clifford relations by
defining 
\begin{equation*}
\hat{\gamma}_{j}=\sum_{r=1}^{d}u_{jr}\gamma_{r}.
\end{equation*}
This claim is easy to verify. The main calculation needed is
\begin{equation*}
\hat{\gamma}_{j}\hat{\gamma}_{k}=\left(\sum_{r}u_{jr}u_{kr}\right)I+\sum_{r<s}\left(u_{jr}u_{ks}-u_{kr}u_{js}\right)\gamma_{r}\gamma_{s}.
\end{equation*}
Setting $j=k$ we find
\begin{equation*}
\hat{\gamma}_{j}^{2}=\left(\sum_{r}u_{jr}u_{jr}\right)I=I
\end{equation*}
 and for $j\neq k$, 
\begin{equation*}
\hat{\gamma}_{j}\hat{\gamma}_{k}=\sum_{r<s}\left(u_{jr}u_{ks}-u_{kr}u_{js}\right)\gamma_{r}\gamma_{s}
\end{equation*}
which implies $\hat{\gamma}_{j}\hat{\gamma}_{k}=-\hat{\gamma}_{k}\hat{\gamma}_{j}$.

\begin{lem}
\label{lem:Unitary_action_on_spectrum} Suppose $\left(A_{1},\dots,A_{d}\right)$
is a $d$-tuple of Hermitian matrices in \textbf{$\boldsymbol{M}_{n}(\mathbb{C})$}
and that $U\in O(d)$. Suppose $\boldsymbol{\mathbb{\lambda}}\in\mathbb{R}^{d}$.
The $d$ matrices
\begin{equation*}
\hat{A}_{j}=\sum_{s}u_{js}A_{s}
\end{equation*}
are also Hermitian and 
\begin{equation*}
\boldsymbol{\mathbb{\lambda}}\in\Lambda\left(A_{1},\dots,A_{d}\right)\iff U\boldsymbol{\mathbb{\lambda}}\in\Lambda\left(\hat{A}_{1},\dots,\hat{A}_{d}\right).
\end{equation*}
\end{lem}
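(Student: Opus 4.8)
The plan is to reduce the bi-implication to a single computation showing that the spectral localizer transforms covariantly under the $O(d)$ action. Concretely, I would set $\hat{\gamma}_j = \sum_r u_{jr}e_j$, which by the calculation immediately preceding the lemma is again a representation of the Clifford relations on the same space as $e_1,\dots,e_d$. Then I would compute
\begin{equation*}
L_{U\boldsymbol{\lambda}}(\hat{A}_1,\dots,\hat{A}_d) = \sum_j\left(\hat{A}_j - (U\boldsymbol{\lambda})_j\right)\otimes e_j = \sum_j\left(\sum_s u_{js}A_s - \sum_s u_{js}\lambda_s\right)\otimes e_j = \sum_s (A_s - \lambda_s)\otimes\left(\sum_j u_{js}e_j\right) = \sum_s (A_s-\lambda_s)\otimes\hat{\gamma}_s,
\end{equation*}
so that $L_{U\boldsymbol{\lambda}}(\hat{A}_1,\dots,\hat{A}_d)$ is exactly the "twisted" localizer for $(A_1,\dots,A_d)$ built from the $\hat{\gamma}_s$ instead of the $e_s$.

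Next I would invoke the preceding Lemma (the one asserting $s_{\min}$ is independent of which nontrivial representation of the Clifford relations is used). Strictly, that lemma is stated for $\gamma_j \in \boldsymbol{M}_r(\mathbb{C})$, and here $\hat{\gamma}_j$ lives in $\mathcal{C}\ell(d)$ rather than in a matrix algebra; so I would either note that $\mathcal{C}\ell(d)$ is itself a (direct sum of) matrix algebra(s) and the $\hat{\gamma}_j$ form a faithful — hence in particular nontrivial — representation, or more cleanly apply that lemma after fixing any faithful matrix representation of $\mathcal{C}\ell(d)$. Either way we conclude
\begin{equation*}
s_{\min}\!\left(\sum_s (A_s-\lambda_s)\otimes\hat{\gamma}_s\right) = s_{\min}\!\left(\sum_s (A_s-\lambda_s)\otimes e_s\right),
\end{equation*}
i.e. $\mu^C_{U\boldsymbol{\lambda}}(\hat{A}_1,\dots,\hat{A}_d) = \mu^C_{\boldsymbol{\lambda}}(A_1,\dots,A_d)$. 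Since membership in the Clifford spectrum is precisely the vanishing of this quantity, the equivalence $\boldsymbol{\lambda}\in\Lambda(A_1,\dots,A_d) \iff U\boldsymbol{\lambda}\in\Lambda(\hat{A}_1,\dots,\hat{A}_d)$ follows at once. Hermiticity of the $\hat{A}_j$ is immediate from realness of $U$ and Hermiticity of the $A_s$, since $\hat{A}_j^* = \sum_s u_{js}A_s^* = \sum_s u_{js}A_s = \hat{A}_j$.

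The only genuinely delicate point is the bookkeeping in applying the previous lemma: one must be careful that $\hat{\gamma}_1,\dots,\hat{\gamma}_d$ is a representation of the Clifford relations of the right size (here, a faithful one, since it is unitarily conjugate-independent only up to the multiplicity/sign issues handled in that lemma's proof), and that the tensor-product convention matches. There is also a small subtlety worth flagging explicitly in the odd-$d$ case: the map $e_j\mapsto\hat{\gamma}_j$ need not be an algebra automorphism of $\mathcal{C}\ell(d)$ preserving the grading, but the previous lemma was precisely designed to absorb that — it only needs the singular values, not the spectrum, to agree. Modulo that reference, everything else is the one-line covariance computation above, so I expect the write-up to be short.
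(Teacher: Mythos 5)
Your proof is correct and is essentially the paper's own argument: the paper twists the Clifford generators by $U^{\top}$ (setting $\tilde{\gamma}_{j}=\sum_{r}u_{rj}\gamma_{r}$, so that the $(A_{1},\dots,A_{d})$ localizer at $\boldsymbol{\lambda}$ in the twisted representation is literally the $(\hat{A}_{1},\dots,\hat{A}_{d})$ localizer at $U\boldsymbol{\lambda}$ in the original one) and then invokes the representation-independence lemma, which is exactly your double-sum rearrangement read in the opposite direction. The only blemishes are cosmetic: the family appearing in your computation is $\sum_{j}u_{js}e_{j}$ (the $U^{\top}$-twist, harmless since $U^{\top}\in O(d)$ as well) rather than your displayed $\hat{\gamma}_{j}=\sum_{r}u_{jr}e_{r}$, and the paper sidesteps your worry about applying the previous lemma inside $\mathcal{C}\ell(d)$ by choosing an arbitrary matrix representation $\gamma_{1},\dots,\gamma_{d}$ from the outset.
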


\begin{proof}
Select any $\gamma_{1},\dots,\gamma_{d}$ that form a representation
of the Clifford relations. Since $U^{\top}$ is just as orthogonal
as $U$ we know that the matrices
\begin{equation*}
\tilde{\gamma}_{j}=\sum u_{rj}\gamma_{r}
\end{equation*}
also form a representation of the Clifford relations. We can compute
the Clifford spectrum using the $\tilde{\gamma}_{j}$ and so look at
\begin{equation*}
\sum_{j}(A_{j}-\lambda_{j}I)\otimes\tilde{\gamma}_{j}=\sum_{j}A_{j}\otimes\tilde{\gamma}_{j}-\sum_{j}\lambda_{j}I\otimes\tilde{\gamma}_{j}.
\end{equation*}
We find
\begin{align*}
\sum_{j}A_{j}\otimes\tilde{\gamma}_{j} & =\sum_{j}\sum_{r}A_{j}\otimes u_{rj}\gamma_{r}\\
 & =\sum_{r}\sum_{j}u_{rj}A_{j}\otimes\gamma_{r}\\
 & =\sum_{j}\sum_{r}u_{jr}A_{r}\otimes\gamma_{j}\\
 & =\sum_{j}\hat{A}_{j}\otimes\gamma_{j}.
\end{align*}
Substituting $A_{j}$ by $\lambda_{j}I$ we find 
\begin{equation*}
\sum_{j}\lambda_{j}I\otimes\tilde{\gamma}_{j}=\sum_{j}\alpha_{j}I\otimes\gamma_{j}
\end{equation*}
where $\boldsymbol{\alpha}=U\boldsymbol{\mathbb{\lambda}}$. Thus 
\begin{equation*}
\sum_{j}(A_{j}-\lambda_{j}I)\otimes\tilde{\gamma}_{j}=\sum_{j}(\hat{A}_{j}-\alpha_{j}I)\otimes\gamma_{j}
\end{equation*}
and we are done.
\end{proof}

\begin{thm}
\label{thm:symmetry_in_spectrum}
Suppose $\left(A_{1},\dots,A_{d}\right)$ is a $d$-tuple of Hermitian
matrices in \textbf{$\boldsymbol{M}_{n}(\mathbb{C})$} and that $U\in O(d)$.
Let 
\begin{equation*}
\hat{A}_{j}=\sum_{s}u_{js}A_{s}.
\end{equation*}
If there exists $Q\in U(n)$ such that
\begin{equation*}
Q\hat{A_{j}}Q^{*}=A_{j}
\end{equation*}
for all $j$ then 
\begin{equation*}
\boldsymbol{\mathbb{\lambda}}\in\Lambda\left(A_{1},\dots,A_{d}\right)\iff U\boldsymbol{\mathbb{\lambda}}\in\Lambda\left(A_{1},\dots,A_{d}\right).
\end{equation*}
\end{thm}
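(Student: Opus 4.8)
The plan is to reduce the statement to Lemma~\ref{lem:Unitary_action_on_spectrum} together with one elementary fact that is not stated separately above: the Clifford spectrum is invariant under simultaneous unitary conjugation of the tuple. So the first step I would take is to record that invariance. Fix any unitary $Q\in U(n)$ and any $\boldsymbol{\mu}\in\mathbb{R}^{d}$, and compute the spectral localizer using the universal generators $e_{j}$. Since $A_{j}-\mu_{j}=Q\left(Q^{*}A_{j}Q-\mu_{j}\right)Q^{*}$ and the Clifford tensor factor is untouched, one gets
\begin{equation*}
L_{\boldsymbol{\mu}}(A_{1},\dots,A_{d})=(Q\otimes 1)\,L_{\boldsymbol{\mu}}(Q^{*}A_{1}Q,\dots,Q^{*}A_{d}Q)\,(Q^{*}\otimes 1).
\end{equation*}
As $Q\otimes 1$ is unitary, the left-hand side is non-invertible in $\boldsymbol{M}_{n}(\mathbb{C})\otimes\mathcal{C}\ell(d)$ precisely when the middle factor is (indeed the two have equal smallest singular value), so $\Lambda(A_{1},\dots,A_{d})=\Lambda(Q^{*}A_{1}Q,\dots,Q^{*}A_{d}Q)$.

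Next I would apply this to the hypothesis. The assumed relation $Q\hat{A}_{j}Q^{*}=A_{j}$ is the same as $\hat{A}_{j}=Q^{*}A_{j}Q$, so the displayed invariance gives $\Lambda(\hat{A}_{1},\dots,\hat{A}_{d})=\Lambda(A_{1},\dots,A_{d})$. Finally, Lemma~\ref{lem:Unitary_action_on_spectrum} applied to $U$ and the tuple $(A_{1},\dots,A_{d})$ states $\boldsymbol{\lambda}\in\Lambda(A_{1},\dots,A_{d})\iff U\boldsymbol{\lambda}\in\Lambda(\hat{A}_{1},\dots,\hat{A}_{d})$; substituting the previous equality of sets on the right-hand side yields $\boldsymbol{\lambda}\in\Lambda(A_{1},\dots,A_{d})\iff U\boldsymbol{\lambda}\in\Lambda(A_{1},\dots,A_{d})$, which is the claim.

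There is no genuinely hard step: the real content was already packaged in Lemma~\ref{lem:Unitary_action_on_spectrum}, and the conjugation invariance is a one-line manipulation of $L_{\boldsymbol{\mu}}$. The only point that needs a little care is bookkeeping with transposes and the direction of conjugation — one must make sure that the hypothesis $Q\hat{A}_{j}Q^{*}=A_{j}$ is matched against the $\tilde{\gamma}_{j}$/$\hat{A}_{j}$ conventions used in Lemma~\ref{lem:Unitary_action_on_spectrum} so that it is $U$ (and not $U^{\top}$ or $U^{-1}$) that appears on the correct side of the final equivalence. I would verify this by tracking a single point $\boldsymbol{\lambda}$ through both identities.
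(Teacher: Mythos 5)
Your proof is correct and follows essentially the same route as the paper: the paper's proof is the one-liner ``unitarily equivalent $d$-tuples have the same Clifford spectrum, so this follows from Lemma~\ref{lem:Unitary_action_on_spectrum},'' and you have simply spelled out the unitary-invariance step and the substitution explicitly.
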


\begin{proof}
Since unitarily equivalent $d$-tuples have the same Clifford spectrum, this follows from Lemma~\ref{lem:Unitary_action_on_spectrum}.
\end{proof}

\section{Scaling alters connectivity}

We regard the lack of a spectral mapping theorem to be a feature,
not a bug, when it comes to applications in physics. We first present a
simple example where applying an affine function to a triple of matrices
does not correspond to an affine transformation applied to the Clifford
spectrum.  The transformation we apply is simply rescaling in two of the dimensions.  Depending on the
the size of the rescaling, the Clifford spectrum remains sphere-like or breaks into pieces.  This is
reminiscent of the transition from individual atoms to a molecule. To demonstrate this kind of transition in a physical system explicitly, we then consider a second example rooted in a well-known 2D lattice whose band structure can be tuned to possess non-zero first Chern numbers \cite{haldane_model_1988}.

\begin{figure}
\includegraphics{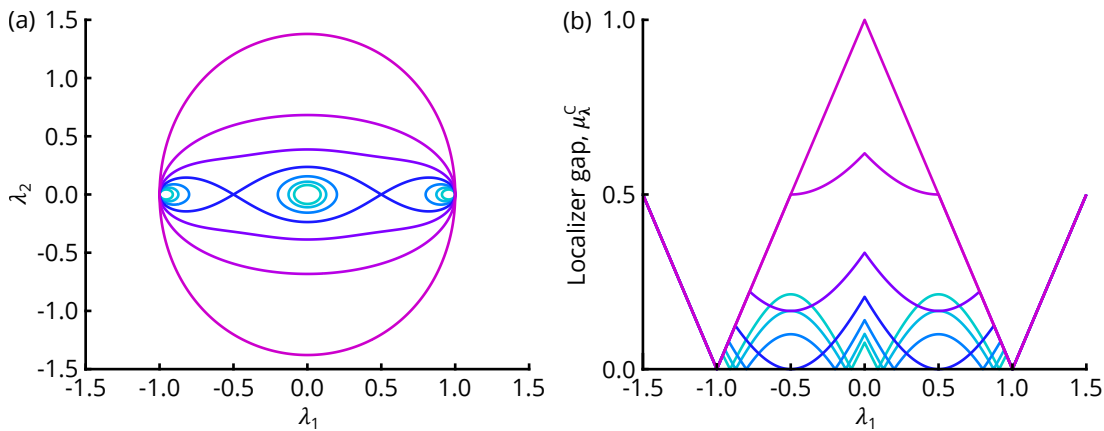}
\caption{(a) The Clifford spectrum $\Lambda(A,tB,tC)$ for $t=1/7,1/6,\dots,1$, for fixed $\lambda_3 = 0$. (b) Plot of the localizer gap as $\lambda_1$ varies for the same set of $t$, with $\lambda_2 = \lambda_3 = 0$. The coloration is consistent between both plots, with $t = 1/7$ corresponding to teal and $t = 1$ displayed as magenta. \label{fig:affine_example}}
\end{figure}

\begin{example}
Consider three matrices,
\[
A=\left[\begin{array}{ccc}
-1 & 0 & 0\\
0 & 0 & 0\\
0 & 0 & 1
\end{array}\right],\ B=\left[\begin{array}{ccc}
0 & 1 & 0\\
1 & 0 & 1\\
0 & 1 & 0
\end{array}\right],\ C=\left[\begin{array}{ccc}
0 & i & 0\\
-i & 0 & i\\
0 & -i & 0
\end{array}\right].
\]
We look at the Clifford spectrum of $\left(A,tB,tC\right)$ for various
values of $t$ between $1/7$ and $1$. We have rotational symmetry
in the second and third coordinate so it suffices to display the slice
with $\lambda_{3}=0$, as in Fig.~\ref{fig:affine_example}. There is a transition in the topology of the
spectrum at $t=1/4$, as smaller positive values of $t$ lead to three
separated surfaces each of which are homeomorphic to a sphere, while larger values of $t$ lead
to a (single) connected surface. This example is similar to
\cite[Example 4.4]{DeBonisLorSver_joint_spectrum}.

\end{example}

The transition observed in the Clifford spectrum, from being a collection of disconnected spheroids to being a single spheroid, manifests in some crystalline materials when the spacing between the constituent atoms or molecules is changed relative to the system's energy scale, in some dimensionless sense. When the spacing between these elements is sufficiently small so that the system behaves as a crystal, the Clifford spectrum is a connected surface. When the spacing between the elements increases beyond some critical value, this surface breaks apart into many separated surfaces, and the system behaves as though it is a collection of decoupled elements.

\begin{example}
To demonstrate this transition, consider a finite piece of a Haldane lattice \cite{haldane_model_1988}, which is a honeycomb lattice that contains both nearest-neighbor and next-nearest-neighbor couplings. The Hamiltonian for this lattice can be written in a tight-binding basis as
\begin{align}
    H =& M \sum_{m,n} \left(a_{m,n}^\dagger a_{m,n} - b_{m,n}^\dagger b_{m,n} \right) - t \sum_{\langle (m,n),(m',n')\rangle} \left(b_{m',n'}^\dagger a_{m,n} + a_{m,n}^\dagger b_{m',n'} \right) \notag \\
    & -t_\textrm{c} \sum_{\langle \langle (m,n),(m',n')\rangle \rangle} \left(e^{i \phi} a_{m',n'}^\dagger a_{m,n} + e^{-i \phi} a_{m,n}^\dagger a_{m',n'} + e^{i \phi} b_{m',n'}^\dagger b_{m,n} + e^{-i \phi} b_{m,n}^\dagger b_{m',n'} \right). \label{eq:Hhal}
\end{align}
Here, $a_{m,n}$ and $b_{m,n}$ ($a_{m,n}^\dagger$ and $b_{m,n}^\dagger$) are the annihilation (creation) operators on the two constituent sublattices in the unit cell identified by the index $(m,n)$, and we are using notation that is common in the physics literature for these types of systems with $c^\dagger$ denoting the conjugate transpose of $c$. The two sublattices have opposite on-site energies (i.e., diagonal elements) $\pm M$. The nearest-neighbor couplings have strength $t$, and the summation $\langle (m,n),(m',n')\rangle$ only includes those lattice sites in the same or adjacent unit cells that are nearest neighbors. The next-nearest-neighbor couplings have strength $t_{\textrm{c}}$ and a direction dependent phase $\phi$; the summation over $\langle \langle (m,n),(m',n')\rangle \rangle$ denotes these next-nearest-neighbor pairs of lattice sites. 

The other two matrices that are combined to form this system's spectral localizer so as to calculate its Clifford spectrum are its position matrices. In the tight-binding basis, these matrices are diagonal, and $X$ and $Y$ contain the coordinates of each lattice site $(x_m, y_n)$ in both sublattices. For the plot shown in Fig.~\ref{fig:kappa}, the site-to-site spacing is $a$, such that the crystal's lattice vectors have length $\sqrt{3}a$. Altogether, the spectral localizer for the Haldane lattice can be written (using the Pauli spin matrices as the Clifford representation) as
\begin{equation}
    L_{\boldsymbol{\lambda}=(\kappa_X x,\kappa_X y,\kappa_H E)}(\kappa_X X,\kappa_X Y,\kappa_H H)= \kappa_X \left(X - x\right)\otimes \sigma_x + \kappa_X \left(Y - y\right)\otimes \sigma_y + \kappa_H \left(H - E\right) \otimes \sigma_z. \label{eq:2DL}
\end{equation}
Here, we have included two dimension-full scaling coefficients, $\kappa_X$ and $\kappa_H$ that have units of inverse distance and inverse energy, respectively, such that $L_{\boldsymbol{\lambda}}$ is dimensionless. Moreover, we are directly identifying the components of $\boldsymbol{\lambda}=(\kappa_X x,\kappa_X y,\kappa_H E)$ as the corresponding physically meaningful values of position $(x,y)$ and energy $E$, but similarly re-scaled to be dimensionless.

Beyond enforcing consistent units in the spectral localizer, the two scaling coefficients $\kappa_X$ and $\kappa_H$ can be heuristically thought of as adjusting the weights given to the position matrices $X -x$ and $Y-y$ relative to lattice's Hamiltonian $H - E$ in the Clifford spectrum $\Lambda(\kappa_X X,\kappa_X Y,\kappa_H H)$. As these weights are adjusted, the Clifford spectrum is either a connected surface, when the lattice's Hamiltonian is more heavily weighted, or many disconnected sphereoids, when the lattice's position operators are more heavily weighted. In the case of the latter, the Clifford spectrum is responding directly to the locations of the individual lattice sites in the system, yielding these many disconnected surfaces centered near each site. In contrast, when the position operators are de-emphasized, the Clifford spectrum reveals information about the lattice as a whole, with the single spheroid in the Clifford spectrum being associated with this lattice's well-known chiral edge states \cite{cerjan_quadratic_2022}.
\end{example}

\begin{figure*}
\includegraphics{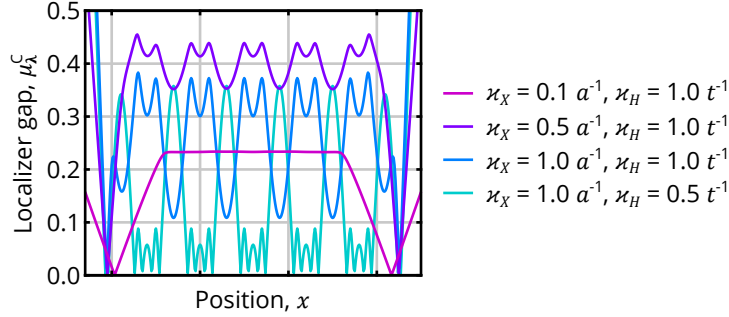}
\caption{Localizer gap $\mu_{\boldsymbol{\lambda}}^{\textrm{C}}$ for a 12-by-12 Haldane lattice whose Hamiltonian is given by Eq.\ (\ref{eq:Hhal}), with $t_{\textrm{c}} = 0.5 t$, $\phi = \pi /6$ and $M = 0$. Here, only $x$ is varied, with $y$ fixed at one of the rows of lattice sites and $E = 0$. Different colors show different values of $\kappa_{X}$ and $\kappa_{H}$, the scaling coefficients in Eq.\ (\ref{eq:2DL}.}
\label{fig:kappa}
\end{figure*}

\section{The $K$-theory of the Clifford resolvent set }

In many examples with $d\geq3$ the Clifford spectrum of $\left(A_{1},\dots,A_{d}\right)$
is a $(d-1)$-dimensional surface in $\mathbb{R}^{d}$. It is frequently
possible to associate a $K$-theory element to the connected components
of the complement of the Clifford spectrum. This is typically in $KO_{j}(\mathbb{R})$
and so is often computed as an integer or element of $\mathbb{Z}/2\mathbb{Z}$. 
Here, we will only need integer invariants. These invariants are (up to
an isomorphism) computed as some multiple of
\begin{equation*}
\mathrm{sig}\left(L_{\boldsymbol{\lambda}}(A_{1},\dots,A_{d})\right)
\end{equation*}
where we take $\textrm{sig}(L)$ to indicate signature, which for
a Hermitian, invertible matrix $L$ is the difference between the
number of positive and of negative eigenvalues. Here we do need to
specify a particular choice of the $\gamma_{j}$. These need to form
an irreducible representation of the Clifford relations. When $d$
is odd we need to make an arbitrary choice of which irreducible representation
to use. The other choice just flips the sign of the signature.

We know \cite{loring2015Pseudospectra_topo_ins} that $\mathrm{sig}\left(L_{\boldsymbol{\lambda}}(A_{1},\dots,A_{d})\right)=0$
when $|\boldsymbol{\lambda}|$ is large or when the $A_{j}$ commute
with each other. The index can only change when $\boldsymbol{\lambda}$
crosses the Clifford spectrum, and is thus constant on the connected
components of the Clifford resolvent. For a discussion of how this
index might serve as a sort of $K$-theory charge of a D-brane, see
\cite{berenstein2012matrix_embeddings}. 

A simple example, following \cite{Kisil_monogenic_func_calc}, will
illuminate this phenomena. For $d=3$ the choice we make for the three
$\gamma_{j}$ is to use the Pauli spin matrices, 
\begin{equation*}
\gamma_{1}=\sigma_{x}=\left[\begin{array}{cc}
0 & 1\\
1 & 0
\end{array}\right],\ \gamma_{2}=\sigma_{y}=\left[\begin{array}{cc}
0 & -i\\
i & 0
\end{array}\right],\ \gamma_{3}=\sigma_{z}=\left[\begin{array}{cc}
1 & 0\\
0 & -1
\end{array}\right].
\end{equation*}
The easiest way to understand this example is to first examine symmetries.
If $U=[u_{ij}]\in O(d)$ has determinant one then 
\begin{equation*}
\hat{\gamma}_{j}=\sum_{r=1}^{d}u_{jr}\gamma_{r}
\end{equation*}
form not just a representation of the Clifford relations, but a representation
that is unitarily equivalent to the original representation. Thus
the $\Lambda(\gamma_{1},\gamma_{2},\gamma_{3})$ has rotational symmetry.
The proof of Lemma~\ref{lem:Unitary_action_on_spectrum} works for
points in the Clifford resolvent, and shows the following.

Suppose $\left(A_{1},\dots,A_{d}\right)$ is a $d$-tuple of Hermitian
matrices in \textbf{$\boldsymbol{M}_{n}(\mathbb{C})$} and that $U\in SO(d)$.
Suppose $\boldsymbol{\mathbb{\lambda}}\in\mathbb{R}^{d}$. The $d$
matrices
\begin{equation*}
\hat{A}_{j}=\sum_{s}u_{js}A_{s}
\end{equation*}
are also Hermitian, and if $\boldsymbol{\mathbb{\lambda}}\notin\Lambda\left(A_{1},\dots,A_{d}\right)$
then $U\boldsymbol{\mathbb{\lambda}}\notin\Lambda\left(\hat{A}_{1},\dots,\hat{A}_{d}\right)$
and 
\begin{equation*}
\mathrm{sig}\left(L_{\boldsymbol{\lambda}}(A_{1},\dots,A_{d})\right)=\mathrm{sig}\left(L_{U\boldsymbol{\lambda}}(\hat{A}_{1},\dots,\hat{A}_{d})\right),
\end{equation*}
again following from unitary equivalence. Thus we can improve our earlier theorem.
 
\begin{thm}
\label{thm:Symmetry_Cliff_and_signature}Suppose $\left(A_{1},\dots,A_{d}\right)$
is a $d$-tuple of Hermitian matrices in \textbf{$\boldsymbol{M}_{n}(\mathbb{C})$},
that $U\in SO(d)$. Let 
\begin{equation*}
\hat{A}_{j}=\sum_{s}u_{js}A_{s}.
\end{equation*}
If there exists $Q\in U(n)$ such that
\begin{equation*}
Q\hat{A_{j}}Q^{*}=A_{j}
\end{equation*}
for all $j$ then 
\begin{equation*}
\boldsymbol{\mathbb{\lambda}}\in\Lambda\left(A_{1},\dots,A_{d}\right)\iff U\boldsymbol{\mathbb{\lambda}}\in\Lambda\left(A_{1},\dots,A_{d}\right)
\end{equation*}
and if $\boldsymbol{\mathbb{\lambda}}\notin\Lambda\left(A_{1},\dots,A_{d}\right)$
then 
\begin{equation*}
\mathrm{sig}\left(L_{\boldsymbol{\lambda}}(A_{1},\dots,A_{d})\right)=\mathrm{sig}\left(L_{U\boldsymbol{\lambda}}(A_{1},\dots,A_{d})\right).
\end{equation*}
\end{thm}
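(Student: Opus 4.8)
The plan is to deduce this in the same way Theorem~\ref{thm:symmetry_in_spectrum} was deduced from Lemma~\ref{lem:Unitary_action_on_spectrum}, but now carrying the signature along with invertibility. The two facts I would rely on are: (i) both the Clifford spectrum and, where it is defined, the integer $\mathrm{sig}(L_{\boldsymbol{\lambda}})$ depend only on the unitary-equivalence class of the Hermitian matrix $L_{\boldsymbol{\lambda}}(A_1,\dots,A_d)=\sum_j(A_j-\lambda_jI)\otimes\gamma_j$ built from a fixed irreducible representation $\gamma_1,\dots,\gamma_d$ of the Clifford relations, since non-invertibility is preserved by unitary conjugation and the spectrum (hence the signature of an invertible Hermitian matrix) is a unitary invariant; and (ii) the displayed observation immediately preceding the theorem, which is exactly the $SO(d)$ strengthening of Lemma~\ref{lem:Unitary_action_on_spectrum} that also tracks $\mathrm{sig}(L_{\boldsymbol{\lambda}})$.

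First I would fix the irreducible $\gamma_j$ used to define $\mathrm{sig}(L_{\boldsymbol{\lambda}})$; by the Lemma in the section ``Multivariable spectrum'' the Clifford spectrum may be computed with these same $\gamma_j$. Applying the observation before the theorem to $U\in SO(d)$ and the tuple $(A_1,\dots,A_d)$ gives, for $\boldsymbol{\lambda}\notin\Lambda(A_1,\dots,A_d)$, both $U\boldsymbol{\lambda}\notin\Lambda(\hat{A}_1,\dots,\hat{A}_d)$ and $\mathrm{sig}(L_{\boldsymbol{\lambda}}(A_1,\dots,A_d))=\mathrm{sig}(L_{U\boldsymbol{\lambda}}(\hat{A}_1,\dots,\hat{A}_d))$. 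Then I would invoke the hypothesis: $Q\hat{A}_jQ^{*}=A_j$ means $(\hat{A}_1,\dots,\hat{A}_d)$ and $(A_1,\dots,A_d)$ are unitarily equivalent $d$-tuples, so conjugating $L_{U\boldsymbol{\lambda}}$ by $Q\otimes I$ yields $U\boldsymbol{\lambda}\notin\Lambda(\hat{A})\iff U\boldsymbol{\lambda}\notin\Lambda(A)$ and $\mathrm{sig}(L_{U\boldsymbol{\lambda}}(\hat{A}))=\mathrm{sig}(L_{U\boldsymbol{\lambda}}(A))$. Chaining the two equalities gives $\mathrm{sig}(L_{\boldsymbol{\lambda}}(A))=\mathrm{sig}(L_{U\boldsymbol{\lambda}}(A))$, while $\boldsymbol{\lambda}\in\Lambda(A)\iff U\boldsymbol{\lambda}\in\Lambda(A)$ is already Theorem~\ref{thm:symmetry_in_spectrum} (which needs only $U\in O(d)$).

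If instead one prefers to re-derive the signature-carrying observation rather than cite it: the proof of Lemma~\ref{lem:Unitary_action_on_spectrum} in fact establishes the exact identity $\sum_j(A_j-\lambda_jI)\otimes\tilde{\gamma}_j=\sum_j(\hat{A}_j-\alpha_jI)\otimes\gamma_j$ with $\tilde{\gamma}_j=\sum_r u_{rj}\gamma_r$ and $\boldsymbol{\alpha}=U\boldsymbol{\lambda}$. Since $U^{\top}\in SO(d)$, the rotated generators $\tilde{\gamma}_j$ are unitarily equivalent to $\gamma_j$, say $\tilde{\gamma}_j=W\gamma_jW^{*}$, so conjugating the identity by $I\otimes W$ exhibits $L_{\boldsymbol{\lambda}}(A_1,\dots,A_d)$ (in the $\gamma_j$) as unitarily equivalent to $L_{U\boldsymbol{\lambda}}(\hat{A}_1,\dots,\hat{A}_d)$, which delivers both conclusions simultaneously.

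I expect the only step with genuine content beyond bookkeeping to be the claim that an $SO(d)$ rotation of a representation of the Clifford relations is implemented by unitary conjugation — the $\mathrm{Spin}(d)\to SO(d)$ double cover in disguise, for even $d$ immediate from the uniqueness of the irreducible representation of $\mathcal{C}\ell(d)$, for odd $d$ the point where $\det U=1$ is essential. Indeed, in odd dimension an orientation-reversing $U$ would send an irreducible representation to the unitary class of $-\gamma_1,\dots,-\gamma_d$, negating $L_{\boldsymbol{\lambda}}$ and hence the signature; this is precisely why the hypothesis must be strengthened from the $O(d)$ of Theorem~\ref{thm:symmetry_in_spectrum} to $SO(d)$ here. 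Everything else is the bookkeeping already carried out for Lemma~\ref{lem:Unitary_action_on_spectrum}.
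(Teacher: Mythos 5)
Your proposal is correct and follows the paper's own line of reasoning: the paper likewise deduces the theorem from the observation that an $SO(d)$ rotation carries an irreducible Clifford representation to a unitarily equivalent one, so that the identity established in the proof of Lemma~\ref{lem:Unitary_action_on_spectrum} exhibits $L_{\boldsymbol{\lambda}}(A_1,\dots,A_d)$ and $L_{U\boldsymbol{\lambda}}(\hat{A}_1,\dots,\hat{A}_d)$ as unitarily equivalent, after which the hypothesis $Q\hat{A}_jQ^*=A_j$ finishes the argument by a further conjugation. Your closing remark pinpointing where $\det U = 1$ is actually used (the odd-$d$ case, where a reflection swaps the two irreducible representations and hence negates the signature) is a correct and useful clarification that the paper leaves implicit.
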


\begin{thm}
The Clifford spectrum of $(\gamma_{1},\gamma_{2},\gamma_{3})$ is
the unit sphere. Moreover
\begin{equation*}
\mu_{\boldsymbol{\lambda}}^{C}(\gamma_{1},\gamma_{2},\gamma_{3})=\left|\left|\boldsymbol{\lambda}\right|-1\right|
\end{equation*}
and 
\begin{equation*}
\mathrm{sig}\left(L_{\boldsymbol{\lambda}}(\gamma_{1},\gamma_{2},\gamma_{3})\right)=\begin{cases}
2 & \text{if }\left|\boldsymbol{\lambda}\right|<1\\
0 & \text{if }\left|\boldsymbol{\lambda}\right|>1
\end{cases}.
\end{equation*}
\end{thm}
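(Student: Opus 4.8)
The plan is to reduce everything to a single explicit $4\times4$ computation along one radial direction. First, by the Lemma showing that $s_{\min}$ of a spectral localizer does not depend on which nontrivial representation of the Clifford relations is used, I may represent the Clifford generators by the Pauli matrices $\sigma_{1},\sigma_{2},\sigma_{3}$ themselves; these also form an irreducible representation, so the same matrix is the one whose signature defines the $K$-theory invariant. With $\gamma_{j}=\sigma_{j}$ everything then reduces to understanding the $4\times4$ Hermitian matrix
\begin{equation*}
L_{\boldsymbol{\lambda}}=\sum_{j=1}^{3}(\sigma_{j}-\lambda_{j}I_{2})\otimes\sigma_{j}=\Big(\sum_{j=1}^{3}\sigma_{j}\otimes\sigma_{j}\Big)-I_{2}\otimes(\lambda_{1}\sigma_{1}+\lambda_{2}\sigma_{2}+\lambda_{3}\sigma_{3}).
\end{equation*}
Next I would invoke the rotational symmetry recorded just before Theorem~\ref{thm:Symmetry_Cliff_and_signature}: for $U\in SO(3)$ the tuple $\hat\gamma_{j}=\sum_{r}u_{jr}\gamma_{r}$ is unitarily equivalent to $(\gamma_{1},\gamma_{2},\gamma_{3})$ via the double cover $SU(2)\to SO(3)$, so by Theorem~\ref{thm:Symmetry_Cliff_and_signature} both $\mu_{\boldsymbol{\lambda}}^{C}(\gamma_{1},\gamma_{2},\gamma_{3})$ and, off the Clifford spectrum, $\mathrm{sig}(L_{\boldsymbol{\lambda}}(\gamma_{1},\gamma_{2},\gamma_{3}))$ depend on $\boldsymbol{\lambda}$ only through $t:=|\boldsymbol{\lambda}|$. (Concretely, conjugating $L_{\boldsymbol{\lambda}}$ by $V\otimes V$ for a suitable $V\in SU(2)$ fixes $\sum_{j}\sigma_{j}\otimes\sigma_{j}$ and turns $\lambda_{1}\sigma_{1}+\lambda_{2}\sigma_{2}+\lambda_{3}\sigma_{3}$ into $t\,\sigma_{3}$.) Hence it suffices to analyze $\boldsymbol{\lambda}=(0,0,t)$ with $t\ge0$.

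Then I would write $L_{(0,0,t)}$ out in the standard basis $(e_{1}\otimes e_{1},\,e_{1}\otimes e_{2},\,e_{2}\otimes e_{1},\,e_{2}\otimes e_{2})$ of $\mathbb{C}^{2}\otimes\mathbb{C}^{2}$. A short computation shows $\sum_{j}\sigma_{j}\otimes\sigma_{j}$ is block diagonal there, acting as $1$ on each of $e_{1}\otimes e_{1}$ and $e_{2}\otimes e_{2}$ and as the $2\times2$ matrix with diagonal entries $-1$ and off-diagonal entries $2$ on $\mathrm{span}\{e_{1}\otimes e_{2},\,e_{2}\otimes e_{1}\}$; since $I_{2}\otimes\sigma_{3}=\mathrm{diag}(1,-1,1,-1)$ is diagonal in the same basis,
\begin{equation*}
L_{(0,0,t)}=[\,1-t\,]\ \oplus\ [\,1+t\,]\ \oplus\ \begin{bmatrix} t-1 & 2\\ 2 & -t-1\end{bmatrix}.
\end{equation*}
The $2\times2$ block has trace $-2$ and determinant $-t^{2}-3<0$, so its eigenvalues are $-1\pm\sqrt{t^{2}+4}$, one strictly positive and one strictly negative for every $t\ge0$. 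Hence $L_{(0,0,t)}$ has eigenvalues $1-t,\ 1+t,\ -1+\sqrt{t^{2}+4},\ -1-\sqrt{t^{2}+4}$ and singular values $|1-t|,\ 1+t,\ \sqrt{t^{2}+4}-1,\ \sqrt{t^{2}+4}+1$.

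Finally I would read off the three conclusions. The elementary inequality $|1-t|+1\le\sqrt{t^{2}+4}$ (square both sides; it reduces to $|1-t|\le 1+t$, which holds for $t\ge0$), together with $|1-t|\le 1+t\le\sqrt{t^{2}+4}+1$, shows the smallest singular value of $L_{(0,0,t)}$ is $|1-t|$; by the symmetry reduction, $\mu_{\boldsymbol{\lambda}}^{C}(\gamma_{1},\gamma_{2},\gamma_{3})=\left|\left|\boldsymbol{\lambda}\right|-1\right|$ for all $\boldsymbol{\lambda}$, and this vanishes exactly on the unit sphere. For the signature, $L_{(0,0,t)}$ is invertible whenever $t\neq1$, and among its four eigenvalues there are three positive and one negative when $t<1$ (so $\mathrm{sig}=2$) and two positive and two negative when $t>1$ (so $\mathrm{sig}=0$); Theorem~\ref{thm:Symmetry_Cliff_and_signature} then carries this to every $\boldsymbol{\lambda}$ with $|\boldsymbol{\lambda}|\neq1$. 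I expect no real obstacle: the only genuine work is tracking the four eigenvalues and checking the one-line inequality that pins down the minimal singular value. The one point to watch is that the symmetry step for the signature needs $U\in SO(3)$ (rotations, not reflections) and an irreducible Clifford representation — both satisfied by the Pauli matrices.
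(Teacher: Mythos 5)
Your proposal is correct and follows essentially the same route as the paper: reduce to $\boldsymbol{\lambda}=(0,0,t)$ via the rotational symmetry (Theorem~\ref{thm:Symmetry_Cliff_and_signature}), compute the explicit $4\times4$ localizer, and read the claims off its eigenvalues $1\pm t$, $-1\pm\sqrt{t^{2}+4}$. You merely spell out the final eigenvalue comparison and sign count that the paper leaves as ``the results follow,'' and your block decomposition agrees with the paper's matrix up to a harmless swap of tensor factors.
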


\begin{proof}
By Theorem~\ref{thm:Symmetry_Cliff_and_signature} we need only deal
with the special case $\boldsymbol{\lambda}=(0,0,z)$. We find that
\begin{align*}
 & L_{(0,0,z)}(\gamma_{1},\gamma_{2},\gamma_{3})\\
 & \quad=\sigma_{x}\otimes\sigma_{x}+\sigma_{y}\otimes\sigma_{y}+\left(\sigma_{z}-zI\right)\otimes\sigma_{z}\\
 & \quad=\left[\begin{array}{cccc}
0 & 0 & 0 & 1\\
0 & 0 & 1 & 0\\
0 & 1 & 0 & 0\\
1 & 0 & 0 & 0
\end{array}\right]+\left[\begin{array}{cccc}
0 & 0 & 0 & -1\\
0 & 0 & 1 & 0\\
0 & 1 & 0 & 0\\
-1 & 0 & 0 & 0
\end{array}\right]+\left[\begin{array}{cccc}
1 & 0 & 0 & 0\\
0 & -1 & 0 & 0\\
0 & 0 & -1 & 0\\
0 & 0 & 0 & 1
\end{array}\right]+\left[\begin{array}{cccc}
-z & 0 & 0 & 0\\
0 & -z & 0 & 0\\
0 & 0 & z & 0\\
0 & 0 & 0 & z
\end{array}\right]\\
 & \quad=\left[\begin{array}{cccc}
1-z & 0 & 0 & 0\\
0 & -1-z & 2 & 0\\
0 & 2 & -1+z & 0\\
0 & 0 & 0 & 1+z
\end{array}\right].
\end{align*}
This has spectrum
\begin{equation*}
\left\{ 1\pm z,-1\pm\sqrt{z^{2}+4}\right\} 
\end{equation*}
and the results follows.
\end{proof}

This result can be generalized in at least two ways. The Pauli spin
matrices can be replaced by generators $(X_{1},X_{2},X_{3})$ of a
fuzzy sphere. In that case \cite{berenstein2012matrix_embeddings},
the Clifford spectrum and signature come out the same, but as the
matrix size increases the norms of the commutators $\left[X_{j},X_{k}\right]$
decrease. We will generalize this is a different direction, calculating
$\Lambda(\gamma_{1},\dots,\gamma_{d})$ for higher $d$. 

To illuminate the proof of the general case we look at the effect
of conjugating $L_{(0,0,z)}(\gamma_{1},\gamma_{2},\gamma_{3})$ by
the unitary
\begin{equation}
\label{eqn:Q_diagonalizing}
Q=\frac{1}{\sqrt{2}}\left[\begin{array}{cccc}
0 & 1 & 1 & 0\\
1 & 0 & 0 & 1\\
1 & 0 & 0 & -1\\
0 & 1 & -1 & 0
\end{array}\right].
\end{equation}
We find
\begin{equation*}
Q\left(\sigma_{x}\otimes\sigma_{x}\right)Q^{*}=\left[\begin{array}{cccc}
1 & 0 & 0 & 0\\
0 & 1 & 0 & 0\\
0 & 0 & -1 & 0\\
0 & 0 & 0 & -1
\end{array}\right]
\end{equation*}
\begin{equation*}
Q\left(\sigma_{y}\otimes\sigma_{y}\right)Q^{*}=\left[\begin{array}{cccc}
1 & 0 & 0 & 0\\
0 & -1 & 0 & 0\\
0 & 0 & 1 & 0\\
0 & 0 & 0 & -1
\end{array}\right]
\end{equation*}
\begin{equation*}
Q\left(\sigma_{z}\otimes\sigma_{z}\right)Q^{*}=\left[\begin{array}{cccc}
-1 & 0 & 0 & 0\\
0 & 1 & 0 & 0\\
0 & 0 & 1 & 0\\
0 & 0 & 0 & -1
\end{array}\right]
\end{equation*}
\begin{equation*}
Q\left(-zI\otimes\sigma_{z}\right)Q^{*}=\left[\begin{array}{cccc}
0 & 0 & 0 & -z\\
0 & 0 & -z & 0\\
0 & -z & 0 & 0\\
-z & 0 & 0 & 0
\end{array}\right]
\end{equation*}
and so $QL_{(0,0,z)}(\gamma_{1},\gamma_{2},\gamma_{3})Q^{*}$ breaks
again into $2$-by-$2$ blocks, specifically
\begin{equation*}
\left[\begin{array}{cc}
1 & -z\\
-z & -3
\end{array}\right],\ \left[\begin{array}{cc}
1 & -z\\
-z & 1
\end{array}\right].
\end{equation*}
Notice that $Q$ is a matrix whose columns are joint approximate eigenvalues
of $\sigma_{x}\otimes\sigma_{x}$ and $\sigma_{y}\otimes\sigma_{y}$.
It also diagonalizes $\sigma_{z}\otimes\sigma_{z}$ since $\sigma_{z}=i\sigma_{y}\sigma_{x}$
and so 
\begin{equation*}
\sigma_{z}\otimes\sigma_{z}=-\left(\sigma_{y}\otimes\sigma_{y}\right)\left(\sigma_{x}\otimes\sigma_{x}\right).
\end{equation*}

\section{Conventions for representations of the Clifford relations}

The minimal representation of the Clifford relations is unique when
$d$ is even. When $d$ is odd we know $\gamma_{d}$ will be a scalar
multiple of a product of the other $\gamma_{j}$. We need to specify
which multiple to determine which of the two irreducible representations
we are using.

If $\gamma_{1},\dots,\gamma_{d}$ is our choice for an irreducible
representations for some odd $d$, we can get to a representation
$\alpha_{1},\dots,\alpha_{d+2}$ by using
\begin{align*}
\alpha_{j} & =\gamma_{j}\otimes\sigma_{x},\quad(j\leq d)\\
\alpha_{d+1} & =I\otimes\sigma_{y},\\
\alpha_{d+2} & =I\otimes\sigma_{z}.
\end{align*}
If we have
\begin{equation*}
\gamma_{d}=\epsilon_{d}\gamma_{d-1}\cdots\gamma_{2}\gamma_{1}
\end{equation*}
then 
\begin{align*}
\epsilon_{d+2}\alpha_{d+1}\cdots\alpha_{2}\alpha_{1} & =\epsilon_{d+2}\left(I\otimes\sigma_{y}\right)\left(\gamma_{d}\otimes\sigma_{x}\right)\left(\gamma_{d-1}\otimes\sigma_{x}\right)\dots\left(\gamma_{1}\otimes\sigma_{x}\right)\\
 & =\epsilon_{d+2}\left(I\otimes\sigma_{y}\right)\left(\gamma_{d}\otimes\sigma_{x}\right)\left(\left(\gamma_{d-1}\cdots\gamma_{1}\right)\otimes\sigma_{x}^{d-1}\right)\\
 & =\epsilon_{d+2}\left(\gamma_{d}\otimes\left(-i\sigma_{z}\right)\right)\left(\epsilon_{d}^{-1}\gamma_{d}\otimes I\right)\\
 & =-i\epsilon_{d+2}\epsilon_{d}^{-1}\left(I\otimes\sigma_{z}\right)\\
 & =-i\epsilon_{d+2}\epsilon_{d}^{-1}\alpha_{d+2}
\end{align*}
so we should set
\begin{equation*}
\epsilon_{d+2}=i\epsilon_{d}.
\end{equation*}
As a base convention, we are using $\gamma_{1}=\sigma_{x}$, $\gamma_{2}=\sigma_{y}$,
$\gamma_{3}=\sigma_{z}$ so $\gamma_{3}=i\gamma_{2}\gamma_{1}$ meaning
\begin{equation}
\label{eqn:Clifford_sign_convention}
\epsilon_{d}=i^{(d+3)/2}.
\end{equation}

\section{The Clifford spectrum of Clifford matrices \label{sec:Clifford_of_Clifford}}

The case where $d$ is even is rather boring, but we work out this
case part way as some of the results will be helpful in the odd case.

\begin{thm}
If $d$ is even and $\gamma_{1},\dots,\gamma_{d}$ is an irreducible
representation of the Clifford relations then
\begin{equation*}
\Lambda(\gamma_{1},\dots,\gamma_{d}) = \{ 0 \}.
\end{equation*}
\end{thm}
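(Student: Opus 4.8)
The plan is to use rotational symmetry to reduce from all of $\mathbb{R}^{d}$ to a single ray, recast the spectral localizer on that ray as a superoperator on $\boldsymbol{M}_{r}(\mathbb{C})$ with $r=2^{d/2}$, and diagonalize that superoperator explicitly. Since $d$ is even we may assume $d\ge2$ (the case $d=0$ being vacuous), and by the lemma above — any representation of the Clifford relations computes the same Clifford spectrum — we may build the localizer from the $\gamma_{j}$ themselves. Thus the goal is to find those $\boldsymbol{\lambda}\in\mathbb{R}^{d}$ for which $\Omega-I\otimes\big(\sum_{j}\lambda_{j}\gamma_{j}\big)$ is singular, where $\Omega:=\sum_{j=1}^{d}\gamma_{j}\otimes\gamma_{j}$.

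First I would record the symmetry. As $d$ is even, the irreducible representation of the Clifford relations is unique up to unitary equivalence, so for every $U\in O(d)$ the rotated tuple $\hat\gamma_{j}=\sum_{s}u_{js}\gamma_{s}$ is an irreducible representation of the Clifford relations, hence unitarily equivalent to $(\gamma_{1},\dots,\gamma_{d})$. Theorem~\ref{thm:symmetry_in_spectrum} then forces $\Lambda(\gamma_{1},\dots,\gamma_{d})$ to be invariant under all of $O(d)$, so it is a union of spheres centred at the origin, and it suffices to decide, for each $\rho\ge0$, whether $\boldsymbol{\lambda}=(\rho,0,\dots,0)$ lies in it — equivalently, whether $\Omega-\rho\,(I\otimes\gamma_{1})$ is singular.

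Next I would pass to $\boldsymbol{M}_{r}(\mathbb{C})$. Under the linear isomorphism $\mathbb{C}^{r}\otimes\mathbb{C}^{r}\cong\boldsymbol{M}_{r}(\mathbb{C})$ sending $v\otimes w$ to $vw^{\top}$, the operator $A\otimes B$ becomes $X\mapsto AXB^{\top}$; since $\gamma_{j}^{\top}$ is again a Hermitian representation of the Clifford relations it is unitarily equivalent to $\gamma_{j}$, and absorbing the implementing unitary by a change of variable (which does not affect invertibility) turns the question into whether
\[
T_{\rho}\colon\boldsymbol{M}_{r}(\mathbb{C})\to\boldsymbol{M}_{r}(\mathbb{C}),\qquad T_{\rho}(X)=\sum_{j=1}^{d}\gamma_{j}X\gamma_{j}-\rho\,X\gamma_{1},
\]
is invertible. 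Now use the basis of $\boldsymbol{M}_{r}(\mathbb{C})$ given by the $2^{d}=r^{2}$ ordered products $\gamma_{S}$, $S\subseteq\{1,\dots,d\}$. Moving $\gamma_{j}$ through the factors of $\gamma_{S}$ gives $\gamma_{j}\gamma_{S}\gamma_{j}=(-1)^{|S|}\gamma_{S}$ for $j\notin S$ and $-(-1)^{|S|}\gamma_{S}$ for $j\in S$, so $\sum_{j}\gamma_{j}\gamma_{S}\gamma_{j}=(-1)^{|S|}(d-2|S|)\gamma_{S}$; and right multiplication by $\gamma_{1}$ carries $\gamma_{S}$ to $\pm\gamma_{S\,\triangle\,\{1\}}$. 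Hence $T_{\rho}$ is block-diagonal with one $2\times2$ block for each $S\subseteq\{2,\dots,d\}$, acting on the span of $\gamma_{S}$ and $\gamma_{S\cup\{1\}}$; writing $m=d-2|S|$, this block is
\[
(-1)^{|S|}\begin{pmatrix} m & -\rho\\ -\rho & -(m-2)\end{pmatrix},\qquad\text{with determinant}\quad -m(m-2)-\rho^{2}=1-(m-1)^{2}-\rho^{2}.
\]
As $S$ ranges over the subsets of $\{2,\dots,d\}$, $m$ ranges over the even integers from $-(d-2)$ to $d$; for real $\rho$ the determinant vanishes only when $(m-1)^{2}\le1$, i.e.\ $m\in\{0,2\}$ (both occur since $d\ge2$), and then only when $\rho=0$. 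So $L_{(\rho,0,\dots,0)}$ is singular exactly at $\rho=0$, and with the $O(d)$-invariance this yields $\Lambda(\gamma_{1},\dots,\gamma_{d})=\{0\}$.

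The main obstacle is bookkeeping rather than ideas: fixing the signs in $\gamma_{j}\gamma_{S}\gamma_{j}$ and in the off-diagonal entries of each block, checking that the pairs $\{\gamma_{S},\gamma_{S\cup\{1\}}\}$ really do partition the $\gamma_{S}$-basis into $T_{\rho}$-invariant planes, and verifying that the transpose introduced by the identification $\mathbb{C}^{r}\otimes\mathbb{C}^{r}\cong\boldsymbol{M}_{r}(\mathbb{C})$ is legitimately removed by unitary equivalence. Once the signs are right, the inequality $(m-1)^{2}\le1$ does the rest.
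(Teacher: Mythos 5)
Your proof is correct. It shares with the paper the basic outline — use $O(d)$-invariance (which you get from the unitary equivalence of any two rotated irreducible representations) to restrict to $\boldsymbol{\lambda}=(\rho,0,\dots,0)$, then decompose the localizer into $2\times 2$ blocks whose determinants are readily computed — but the mechanism you use to obtain the block decomposition is different. The paper stays in $\mathbb{C}^{r}\otimes\mathbb{C}^{r}$, observes that the commuting Hermitian involutions $G_{j}=\gamma_{j}\otimes\gamma_{j}$ must have joint spectrum exactly $\{\pm 1\}^{\times d}$ with multiplicity one (argued by a symmetry-and-dimension-count), and then shows that the perturbing term $I\otimes\gamma_{1}$ pairs up joint eigenvectors $\boldsymbol{b}_{\boldsymbol{p}}\leftrightarrow\boldsymbol{b}_{\tilde{\boldsymbol{p}}}$. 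You instead pass to the superoperator picture on $\boldsymbol{M}_{r}(\mathbb{C})$, where the localizer becomes $X\mapsto\sum_{j}\gamma_{j}X\gamma_{j}-\rho X\gamma_{1}$ after absorbing the transpose by a right-multiplication similarity; the monomial basis $\{\gamma_{S}\}$ then diagonalizes the first term by an explicit sign count, and right multiplication by $\gamma_{1}$ carries $\gamma_{S}$ to $\pm\gamma_{S\,\triangle\,\{1\}}$, giving the pairing immediately. Your route buys concreteness — the $\gamma_{S}$ basis and the commutation signs are completely explicit, so you avoid the abstract multiplicity-one argument — at the cost of the small bookkeeping needed to remove the transpose from the $AXB^{\top}$ formula. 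The resulting $2\times 2$ determinants match the paper's after the substitution $s=m-1$, and both arguments correctly locate the unique vanishing at $\rho=0$ (for $m\in\{0,2\}$ in your notation, $s=\pm 1$ in theirs).
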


\begin{proof}
Let $G_{j}=\gamma_{j}\otimes\gamma_{j}$ so that
\begin{equation*}
L_{\boldsymbol{0}}=L_{\boldsymbol{0}}\left(\gamma_{1},\dots,\gamma_{d}\right)=\sum_{j=1}^d G_{j}.
\end{equation*}
We want to show this is singular. The $G_{j}$ are matrices of size
$2^{d}$ and they pairwise commute. Each $G_{j}$ is Hermitian and
squares to one, so the joint spectrum of $(G_{1},\dots,G_{d})$ is
a subset of $\{\pm1\}^{\times d}$. Since replacing a single $\gamma_{j}$
by $-\gamma_{j}$ leads to another representation of the Clifford
relations, unitarily equivalent to the original, there is a unitary
by which we can conjugate to fix all the $G_{j}$ except the one that
is negated. Similarly we can exchange a pair, $\gamma_{j}$ with $\gamma_{k}$,
and so get another unitary whose action swaps $G_{j}$ and $G_{k}$.
Thus the joint spectrum must have the same multiplicity at all points
of $\{\pm1\}^{\times d}$ and so that common multiplicity must be
one. That means there is an  orthonormal  basis 
\begin{equation}
\label{eqn:Basis_diagonalizing}
\left\{ \boldsymbol{b}_{\boldsymbol{p}}\,\mid\,\boldsymbol{p}\in\{\pm1\}^{\times d}\right\} 
\end{equation}
such that 
\begin{equation*}
G_{j}\boldsymbol{b}_{\boldsymbol{p}}=p_{j}\boldsymbol{b}_{\boldsymbol{p}}\quad(1\leq j\leq d).
\end{equation*}
Thus the spectrum of $L_{\boldsymbol{0}}$ consists only of even numbers,
including zero. An eigenvalue of $0$ will arise from every $\boldsymbol{p}$ that
has an equal number of $-1$ and $+1$ coordinates.

Notice that in the basis given in (\ref{eqn:Basis_diagonalizing}) is, up to order, the unique basis in which the localizer at zero becomes diagonal.  In the case $d=3$ these vectors are the columns of $Q$ in (\ref{eqn:Q_diagonalizing}).  For the general case we just need to know we have such a basis.

We want to show that there are no points in the Clifford spectrum besides $\boldsymbol{0}$.
By the discussion in Section~\ref{sec:symmetries} we know that Theorem~\ref{thm:symmetry_in_spectrum} applies.  Thus it suffices to consider $\boldsymbol{\lambda} $ equal to $ (x,0,\dots0)$.
 If we set $H= I \otimes \gamma_1$ then
\begin{equation*}
L_{(x,0,\dots0)}\left(\gamma_{1},\dots,\gamma_{d}\right)=\sum G_{j}-xH.
\end{equation*}
Notice that $G_{1}H=HG_{1}$ and $G_{j}H=-HG_{j}$ for $j\geq2$.
Thus
\begin{equation*}
G_{1}H\boldsymbol{b}_{\boldsymbol{p}}=HG_{1}\boldsymbol{b}_{\boldsymbol{p}}=H\left(p_{1}\boldsymbol{b}_{\boldsymbol{p}}\right)=p_{1}H\boldsymbol{b}_{\boldsymbol{p}}
\end{equation*}
and, for $j\geq2$, 
\begin{equation*}
G_{j}H\boldsymbol{b}_{\boldsymbol{p}}=-HG_{j}\boldsymbol{b}_{\boldsymbol{p}}=-H\left(p_{j}\boldsymbol{b}_{\boldsymbol{p}}\right)=-p_{j}H\boldsymbol{b}_{\boldsymbol{p}}.
\end{equation*}
The joint eigenspaces are one dimensional so we know there is a unit
scalar $\theta_{\boldsymbol{p}}$ so that
\begin{equation*}
H\boldsymbol{b}_{\boldsymbol{p}}=\theta_{\boldsymbol{p}}\boldsymbol{b}_{\tilde{\boldsymbol{p}}}
\end{equation*}
 where
\begin{equation*}
\tilde{\boldsymbol{p}}=(p_{1},-p_{2},-p_{3},\dots,-p_{d}).
\end{equation*}
Since $H^2=I$ we find that 
\begin{equation*}
    \boldsymbol{b}_{\boldsymbol{p}}=H^2\boldsymbol{b}_{\boldsymbol{p}}=H(\theta_{\boldsymbol{p}}\boldsymbol{b}_{\tilde{\boldsymbol{p}}})=\theta_{\boldsymbol{p}}\theta_{\tilde{\boldsymbol{p}}}\boldsymbol{b}_{\boldsymbol{p}}
\end{equation*}
thus proving that $\overline{\theta_{\boldsymbol{p}}}=\theta_{\tilde{\boldsymbol{p}}}$.
Thus, in this basis, we know that $L_{(x,0,\dots0)}\left(\gamma_{1},\dots,\gamma_{d}\right)$
decomposes into $2$-by-$2$ blocks, one block for each pair $\left\{ \boldsymbol{p},\tilde{\boldsymbol{p}}\right\} $.
To avoid a double count we assume 
\begin{equation*}
s=\sum_{j=2}^{n}p_{j}\geq1
\end{equation*}
(notice $s$ must be odd).  The corresponding block is 
\begin{equation*}
\left[\begin{array}{cc}
p_{1}+s & -\overline{\theta_{\boldsymbol{p}}}x\\
-\theta_{\boldsymbol{p}}x & p_{1}-s
\end{array}\right].
\end{equation*}
This block has determinant
\begin{equation*}
-x^{2}+\left(p_{1}^{2}-s^{2}\right)=-x^{2}+\left(1-s^{2}\right).
\end{equation*}
Since $1-s^{2}\leq0$
this block can be singular only when $s=1$ and $x=0$.
\end{proof}

When $d$ is odd we can build on what we learned in the even case.  The exception is when $d=1$.  To get a zero sphere we would need a generator of the full Clifford algebra $\mathbb{C}\oplus\mathbb{C}$, an Hermitian matrix with spectrum $\{-1,1\}$.    For larger $d$ we need to use an irreducible representation of the Clifford algebra to be able to see the $K$-theory.

\begin{thm}
If $d \geq 3$ is odd and $\gamma_{1},\dots,\gamma_{d}$ is an irreducible
representation of the Clifford relations then
\begin{equation*}
\Lambda(\gamma_{1},\dots,\gamma_{d}) = S^{d-1},
\end{equation*}
the unit sphere in $\mathbb{R}^d$.
Moreover
\begin{equation*}
\mathrm{sig}\left(L_{\boldsymbol{\lambda}}(\gamma_{1},\dots,\gamma_{d})\right)
=
\left(\begin{array}{c}
d-1\\
 \tfrac{1}{2}(d-1) 
\end{array}\right)
\end{equation*}
if $0\leq|\boldsymbol{\lambda}|<1$ and this signature is
zero for $|\boldsymbol{\lambda}|>1$.
\end{thm}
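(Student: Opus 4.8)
The plan is to mimic the proof of the even-dimensional case and of the $d=3$ example, reducing everything to a family of explicit $2\times 2$ blocks. First I would use the rotational symmetry: since $\gamma_{1},\dots,\gamma_{d}$ is irreducible and $d$ is odd, every $U\in SO(d)$ carries $(\gamma_{j})$ to a unitarily equivalent representation, so Theorem~\ref{thm:Symmetry_Cliff_and_signature} applies with $Q\hat{\gamma}_{j}Q^{*}=\gamma_{j}$. Hence both $\Lambda(\gamma_{1},\dots,\gamma_{d})$ and the function $\boldsymbol{\lambda}\mapsto\mathrm{sig}\,L_{\boldsymbol{\lambda}}(\gamma_{1},\dots,\gamma_{d})$ depend only on $|\boldsymbol{\lambda}|$, and it suffices to treat $\boldsymbol{\lambda}=(x,0,\dots,0)$.

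Next I would set $G_{j}=\gamma_{j}\otimes\gamma_{j}$ and $H=I\otimes\gamma_{1}$, so that $L_{(x,0,\dots,0)}=\sum_{j=1}^{d}G_{j}-xH$. The tuple $\gamma_{1},\dots,\gamma_{d-1}$ is a $2^{(d-1)/2}$-dimensional representation of the Clifford relations in $d-1$ (even) variables, hence irreducible, so the argument proving the even case produces an orthonormal basis $\{\boldsymbol{b}_{\boldsymbol{q}}:\boldsymbol{q}\in\{\pm1\}^{\times(d-1)}\}$ with $G_{j}\boldsymbol{b}_{\boldsymbol{q}}=q_{j}\boldsymbol{b}_{\boldsymbol{q}}$ for $j\le d-1$, each sign pattern of multiplicity one. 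Because $\gamma_{d}=\epsilon_{d}\gamma_{d-1}\cdots\gamma_{1}$ gives $G_{d}=\epsilon_{d}^{2}\,G_{d-1}\cdots G_{1}$ with $\epsilon_{d}^{2}=(-1)^{(d-1)/2}$ (this follows from $(\gamma_{d-1}\cdots\gamma_{1})^{2}=(-1)^{(d-1)/2}$ and is consistent with \eqref{eqn:Clifford_sign_convention}), each $\boldsymbol{b}_{\boldsymbol{q}}$ is also a $G_{d}$-eigenvector with eigenvalue $p_{d}:=(-1)^{(d-1)/2}q_{1}\cdots q_{d-1}$. So, writing $\boldsymbol{p}=(\boldsymbol{q},p_{d})$, the joint spectrum of $(G_{1},\dots,G_{d})$ is exactly the set of $\boldsymbol{p}\in\{\pm1\}^{\times d}$ with $p_{d}=(-1)^{(d-1)/2}p_{1}\cdots p_{d-1}$, each of multiplicity one.

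As in the even proof, $H$ commutes with $G_{1}$ and anticommutes with $G_{2},\dots,G_{d}$, so $H\boldsymbol{b}_{\boldsymbol{p}}=\theta_{\boldsymbol{p}}\boldsymbol{b}_{\tilde{\boldsymbol{p}}}$ with $\tilde{\boldsymbol{p}}=(p_{1},-p_{2},\dots,-p_{d})$, $|\theta_{\boldsymbol{p}}|=1$ and $\theta_{\tilde{\boldsymbol{p}}}=\overline{\theta_{\boldsymbol{p}}}$; one checks $\tilde{\boldsymbol{p}}$ is again admissible and (since $d\ge3$) $\tilde{\boldsymbol{p}}\neq\boldsymbol{p}$. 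Thus $L_{(x,0,\dots,0)}$ is block diagonal with $2\times2$ blocks $\left[\begin{array}{cc} p_{1}+s & -x\overline{\theta_{\boldsymbol{p}}}\\ -x\theta_{\boldsymbol{p}} & p_{1}-s\end{array}\right]$, where $s=\sum_{j=2}^{d}p_{j}$, of determinant $1-s^{2}-x^{2}$. Since $d-1$ is even, $s$ ranges over even integers, so $1-s^{2}-x^{2}=0$ forces $s=0$ and $x=\pm1$; a short combinatorial check shows $s=0$ is realized by some admissible $\boldsymbol{p}$. Hence $L_{(x,0,\dots,0)}$ is singular exactly for $x=\pm1$, and, together with $SO(d)$-symmetry and the observation that $L_{\boldsymbol{0}}=\sum G_{j}$ has only odd (hence nonzero) eigenvalues, this yields $\Lambda(\gamma_{1},\dots,\gamma_{d})=S^{d-1}$.

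For the signature I would evaluate at $\boldsymbol{\lambda}=\boldsymbol{0}$, where $L_{\boldsymbol{0}}$ is diagonal in the $\boldsymbol{b}_{\boldsymbol{p}}$ basis with eigenvalues $\sum_{j=1}^{d}p_{j}$ over admissible $\boldsymbol{p}$. Parametrizing by $\boldsymbol{q}$ and summing $\operatorname{sign}\!\big(\sum_{j<d}q_{j}+(-1)^{(d-1)/2}q_{1}\cdots q_{d-1}\big)$, the terms with $\sum_{j<d}q_{j}\neq0$ cancel in pairs $\boldsymbol{q}\leftrightarrow-\boldsymbol{q}$, leaving only the $\boldsymbol{q}$ with $\sum_{j<d}q_{j}=0$; for those $q_{1}\cdots q_{d-1}=(-1)^{(d-1)/2}$, so each contributes $+1$, and there are $\binom{d-1}{(d-1)/2}$ of them. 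Hence $\mathrm{sig}\,L_{\boldsymbol{0}}=\binom{d-1}{(d-1)/2}$; since the signature is locally constant on the Clifford resolvent and $\{|\boldsymbol{\lambda}|<1\}$ is connected, this value persists there, while $\{|\boldsymbol{\lambda}|>1\}$ is connected and the signature vanishes for large $|\boldsymbol{\lambda}|$, giving $0$. I expect the main obstacle to be bookkeeping the sign $\epsilon_{d}^{2}$ correctly, so that the signature comes out $+\binom{d-1}{(d-1)/2}$ rather than its negative; the opposite irreducible choice of the $\gamma_{j}$ replaces $L_{\boldsymbol{\lambda}}$ by $-L_{\boldsymbol{\lambda}}$ and flips this sign.
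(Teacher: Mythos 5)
Your proof is correct and follows essentially the same route as the paper's: restrict to $\boldsymbol{\lambda}=(x,0,\dots,0)$ by $SO(d)$ symmetry, diagonalize $G_{1},\dots,G_{d-1}$ using the even-case basis, observe $G_{d}=(-1)^{(d-1)/2}G_{d-1}\cdots G_{1}$, and reduce to the $2\times2$ blocks in $\boldsymbol{b}_{\boldsymbol{p}},\,\boldsymbol{b}_{\tilde{\boldsymbol{p}}}$. The only real difference is cosmetic, in the signature count: the paper solves $|\alpha(\boldsymbol{p})|=1$ by bounding the number $k$ of $-1$'s and pairs blocks via $\boldsymbol{p}\leftrightarrow\tilde{\boldsymbol{p}}$, arriving at $2\binom{d-2}{(d-1)/2}$, while you pair eigenvalues via $\boldsymbol{q}\leftrightarrow-\boldsymbol{q}$ and count $\boldsymbol{q}$ with $\sum_{j<d}q_{j}=0$ directly, getting $\binom{d-1}{(d-1)/2}$ in one step; both are valid and give the same answer. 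Every nontrivial claim you leave as "one checks" (admissibility of $\tilde{\boldsymbol{p}}$, $\epsilon_{d}^{2}=(-1)^{(d-1)/2}$, the cancellation for $\sum q_{j}\neq 0$) does in fact check out.
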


\begin{proof}
In this case, we can assume $\gamma_1,\dots ,\gamma_{d-1}$ satisfy the conditions of the last theorem, so we use the orthonormal basis 
\begin{equation*}
\left\{ \boldsymbol{b}_{\boldsymbol{p}}\,\mid\,\boldsymbol{p}\in\{\pm1\}^{\times (d-1)}\right\} 
\end{equation*}
such that 
\begin{equation*}G_{j}\boldsymbol{b}_{\boldsymbol{p}}=p_{j}\boldsymbol{b}_{\boldsymbol{p}}\quad(1\leq j\leq d-1).
\end{equation*}
We have also to consider $G_d = \gamma_d \otimes \gamma_d$.  According to (\ref{eqn:Clifford_sign_convention}) we will have
\begin{equation*}
G_{d}= (-1)^{\frac{d-1}{2}} \prod_{j<d}G_j.
\end{equation*}
This means that 
\begin{equation*}
L_{\boldsymbol{0}}=L_{\boldsymbol{0}}\left(\gamma_{1},\dots,\gamma_{d}\right)=\sum_{j=1}^d G_{j}
\end{equation*}
has still $\boldsymbol{b}_{\boldsymbol{p}}$ as an eigenvector, but now with eigenvalue
\begin{equation*}
\alpha(\boldsymbol{p}) =  \sum_{j=1}^{d-1} p_j + (-1)^{\frac{d-1}{2}} \prod_{j=1}^{d-1} p_j.
\end{equation*}

These eigenvalues are now odd so $L_{\boldsymbol{0}}$ is now nonsingular.
We again have rotational symmetry so we can restrict our attention to $\boldsymbol{\lambda}$ of the form $(x,0,\dots,0)$.
The localizer
\begin{equation*}
L_{(x,0,\dots0)}\left(\gamma_{1},\dots,\gamma_{d}\right)
\end{equation*}
again breaks into $2$-by-$2$ blocks, where the blocks are \begin{equation*}
B_{\boldsymbol{p}} = \left[\begin{array}{cc}
 \alpha(\boldsymbol{p}) & -\overline{\theta_{p}}x\\
-\theta_{p}x &  \alpha(\tilde{\boldsymbol{p}})
\end{array}\right].
\end{equation*}
 where
\begin{equation*}
\tilde{\boldsymbol{p}}=(p_{1},-p_{2},-p_{3},\dots,-p_{d-1}).
\end{equation*}
This block has determinant
$\alpha(\boldsymbol{p})\alpha(\tilde{\boldsymbol{p}}) - x^2$.

We need to figure out the positive values of $\alpha(\boldsymbol{p})\alpha(\tilde{\boldsymbol{p}})$ at $\boldsymbol{p}$. We first compute
\begin{align*}
\alpha(\tilde{\boldsymbol{p}}) & =2p_{1}-\sum_{j=1}^{d-1}p_{j}-(-1)^{\frac{d-1}{2}}\prod_{j=1}^{d-1}p_{j}\\
 & =2p_{1}-\alpha(\boldsymbol{p}).
\end{align*}
The only way to avoid having $\alpha(\boldsymbol{p})$ and $\alpha(\tilde{\boldsymbol{p}})$ of opposite signs is to have $\alpha(\boldsymbol{p})=p_1=\pm1$.
When this happens,
the block $B_{\boldsymbol{p}}$ has   eigenvalues $1\pm x$. Thus $L_{(x,0,\dots0)}\left(\gamma_{1},\dots,\gamma_{d}\right)$ is singular only when $x=\pm1$, so the Clifford spectrum is (by symmetry) the unit sphere.

As to the signature, we need only compute this at $\boldsymbol{\lambda}=\boldsymbol{0}$.
We first look for all solutions to $|\alpha(\boldsymbol{p})|=1$.
If there are $k$ occurrences of $-1$ in $p_{1},\dots,p_{d-1}$ then
\begin{equation*}
\alpha(\boldsymbol{p})=(d-1)-2k+(-1)^{\frac{d-1}{2}}(-1)^{k}.
\end{equation*}
The product works out as $\pm1$ so  $k$ must be a small range,
specifically
\begin{equation*}
\tfrac{1}{2}(d-1)-1\leq k\leq\tfrac{1}{2}(d-1)+1.
\end{equation*}
When $k=\tfrac{1}{2}(d-1)$ we find 
\begin{align*}
(d-1)-2k & =0\\
(-1)^{\frac{d-1}{2}}(-1)^{k} & =1
\end{align*}
so we have a solution with $\alpha(\boldsymbol{p})=1$. When $k=\frac{1}{2}(d+1)$
we find 
\begin{align*}
(d-1)-2k & =-2\\
(-1)^{\frac{d-1}{2}}(-1)^{k} & =-1
\end{align*}
which we do not have a solution. When $k=\frac{1}{2}(d-3)$ we find
\begin{align*}
(d-1)-2k & =2\\
(-1)^{\frac{d-1}{2}}(-1)^{k} & =-1
\end{align*}
and so have a solution with $\alpha(\boldsymbol{p})=1$. There are
no solutions with $\alpha(\boldsymbol{p})=-1$.

If we add the condition that $p_1=\alpha(\textbf{p})=1$ we find that $\boldsymbol{p}\mapsto\tilde{\boldsymbol{p}}$ 
swaps the two types of solutions.  Thus we need only count the solutions
with $k=\frac{1}{2}(d-1)$.
Let us count the solutions with $p_{1}=1$ and $k=\frac{1}{2}(d-1)$.
Here we need to pick the places for $-1$ out
of $d-2$ places, so there are
\begin{equation*}
{d-2 \choose \frac{1}{2}(d-1)}
\end{equation*}
values for $\boldsymbol{p}$ here. This is also the number of blocks with non-zero
signature. 
Each has block has index $2$ so the overall signature is
\begin{equation*}
2{d-2 \choose \frac{1}{2}(d-1)}={d-1 \choose \frac{1}{2}(d-1)}.
\end{equation*}
\end{proof}

Software to numerically verify the value of the signature, for $d$ up to about $11$, is available on GitHub \cite{software_repo}.

\section{Spectrum and index of a 4D system \label{sec:4D_system}}

There is still much we do not know about almost commuting matrices, and some of what we do know is  
nonconstructive.  We know that Lin's theorem \cite{LinAlmostCommutingHermitian1994} states that for every pair of almost commuting Hermitian matrices there is close-by a pair of commuting Hermitian matrices. 
The proofs of this theorem are sufficiently non-constructive that we do not have any reasonable algorithm to find the nearby commuting pair.  For three Hermitian matrices, the result fails; the essential example that shows this is nice and constructive, as it is just the three matrices generating a fuzzy sphere \cite{HastingsLoringWannier}.  For real symmetric matrices, we have a real version of Lin's theorem \cite{LorSorensenDisk}, but we do not know if the real version of Lin's theorem holds for three matrices. As we show in this section, we pick up a $K$-theoretical obstruction for five real matrices, so we know that the real version fails in this case.   There is an example \cite[\S 6]{BoersLorRuiz_Pictures_K-theory} that shows this obstruction in nontrivial, but the prior example in that study is far from constructive.

There are abstract results \cite[\S 4-5]{BoersLorRuiz_Pictures_K-theory} related to bivariant $K$-theory for real $C^*$-algebras that tell us there are five 
real symmetric matrices of $(A_1,\dots,A_5)$ norm one with
\begin{equation}
    \label{eqn:fuzzy_sphere_commutators}
    \| [ A_j , A_k ] \|
\end{equation}
and
\begin{equation}
    \label{eqn:fuzzy_sphere_square_sum}
    \left\| \sum_j A_j^2 -I \right\|
\end{equation}
arbitrarily small, with the property that these are not close to five commuting real symmetric matrices.  The obstruction is $K$-theoretical and can be expressed as the fact that
\begin{equation*}
    L_{\boldsymbol{0}} (A_1,\dots,A_5)
\end{equation*}
has nontrivial signature.  From this we can conclude that
\begin{equation*}
   \Lambda (A_1,\dots,A_5)
\end{equation*}
is very similar to a four-sphere.  We can say it is a compact set that separates the origin from infinity and that it is close to the unit sphere.  We cannot rule out a union of concentric spheres, for example.  More critically, it is just about impossible to unwind the homotopy arguments used in $E$-theory so we have no way of writing down these matrices.

A possible construction of such matrices is to utilize
the theoretical models of 4D topological insulators that have a real Hamiltonian, from \cite[\S 3]{schnyder2008}, and then truncate these models to be finite.  By the results of \cite{LoringSchuba_even_dim_localizer} we know that if we use very large models, these will have the correct $K$-theory and have the quantities in (\ref{eqn:fuzzy_sphere_commutators}) and (\ref{eqn:fuzzy_sphere_square_sum}) as small as desired.  Potentially, this might only work with matrices so large we cannot even store them on a computer.

In this section we will show that this is a somewhat practical approach, in that the $K$-theory comes out to be as expected in small models.  To get commutators that are truly small, say less than $1/100$, might require a large computer, but should be possible if one is curious enough.  We cannot prove that the $K$-theory stays nontrivial for all models larger than the one we work with, but past experience indicates that this should be the case.

To demonstrate the appearance of higher dimensional spheres in the Clifford spectrum of physical systems, we consider a 4D tight-binding lattice that has been previously realized in an electric circuit \cite{price_four-dimensional_2020,wang_circuit_2020}. This model has four sites per unit cell, on which the annihilation operators can be labelled as $a_{m,n,j,l}$, $b_{m,n,j,l}$, $c_{m,n,j,l}$, $d_{m,n,j,l}$, where the indices specify the unit cell in all four dimensions. The corresponding creation operators are given by the conjugate transpose of these operators (again denoted using physics notation), $a^\dagger, b^\dagger, c^\dagger, d^\dagger$. The lattice's Hamiltonian in the standard tight-binding basis can be divided into four sets of terms, the on-site energies
\begin{equation}
    H_{\textrm{on}} = M \sum_{m,n,j,l} \left(a_{m,n,j,l}^\dagger a_{m,n,j,l} + b_{m,n,j,l}^\dagger b_{m,n,j,l} - c_{m,n,j,l}^\dagger c_{m,n,j,l}  - d_{m,n,j,l}^\dagger d_{m,n,j,l}\right),
\end{equation}
the nearest neighbor couplings within a single unit cell
\begin{equation}
    H_{\textrm{NN,in}} = -t \sum_{m,n,j,l} \left(c_{m,n,j,l}^\dagger a_{m,n,j,l} - b_{m,n,j,l}^\dagger d_{m,n,j,l} + d_{m,n,j,l}^\dagger a_{m,n,j,l} + b_{m,n,j,l}^\dagger c_{m,n,j,l}\right) + \textrm{H.c.} ,
\end{equation}
the nearest neighbor couplings between different unit cells
\begin{align}
    H_{\textrm{NN,out}} = -t \sum_{m,n,j,l} ( & a_{m+1,n+1,j,l}^\dagger c_{m,n,j,l} - d_{m+1,n+1,j,l}^\dagger b_{m,n,j,l} + c_{m-1,n,j,l}^\dagger a_{m,n,j,l} \notag \\ 
    &- b_{m-1,n,j,l}^\dagger d_{m,n,j,l} + a_{m,n,j+1,l+1}^\dagger d_{m,n,j,l} + c_{m,n,j+1,l+1}^\dagger b_{m,n,j,l} \notag  \\ 
    & + d_{m,n,j-1,l}^\dagger a_{m,n,j,l} + b_{m,n,j-1,l}^\dagger c_{m,n,j,l}) + \textrm{H.c.},
\end{align}
and the longer-range couplings
\begin{align}
    H_{\textrm{LR}} = -t_1 \sum_{m,n,j,l} ( & a_{m+1,n+1,j+1,l+1}^\dagger a_{m,n,j,l} + b_{m+1,n+1,j+1,l+1}^\dagger b_{m,n,j,l} \notag \\
    &- c_{m+1,n+1,j+1,l+1}^\dagger c_{m,n,j,l} - d_{m+1,n+1,j+1,l+1}^\dagger d_{m,n,j,l})  + \textrm{H.c.}.
\end{align}
Then, the full lattice Hamiltonian is given by
\begin{equation}
    H = H_{\textrm{on}} + H_{\textrm{NN,in}} + H_{\textrm{NN,out}} + H_{\textrm{LR}}. \label{eq:4DHtot}
\end{equation}
In these equations, the notation ``$+ \textrm{H.c.}$'' is used to indicate that every term's Hermitian conjugate is also included, e.g., if $t a_{m,n,j,l}^\dagger b_{m',n',j',l'}$ is included, then so is $\bar{t} b_{m',n',j',l'}^\dagger a_{m,n,j,l}$.

In the tight-binding basis, this 4D lattice's position operators $X_{1,2,3,4}$ are simply diagonal matrices, in which each diagonal element $[X_j]_{kk}$ is the position in the $j$th dimension of the $k$th unit cell. For simplicity for the examples considered in Figs.\ \ref{fig:AI} and \ref{fig:A}, all four lattice sites in each unit cell are assigned the same spatial coordinate, and unit cells are separated by the same lattice constant $a$ in all four spatial directions. Thus, we can form the 4D tight-binding lattice's spectral localizer as
\begin{equation}
    L_{\boldsymbol{\lambda}=(x_1,x_2,x_3,x_4,E)}(X_1,X_2,X_3,X_4,H)=\sum_{j=1}^4 \kappa \left(X_{j} - \lambda_j\right)\otimes \Gamma_j + \left(H - \lambda_5\right) \otimes \Gamma_5. \label{eq:4DL}
\end{equation}
Here, we are using the $d=5$ representation of the Clifford relations defined as
\begin{gather*}
\Gamma_1 = \left[\begin{array}{cccc}
0 & 0 & 1 & 0\\
0 & 0 & 0 & -1\\
1 & 0 & 0 & 0\\
0 & -1 & 0 & 0
\end{array}\right], \;\;\;\;
\Gamma_2 = \left[\begin{array}{cccc}
0 & 0 & -i & 0\\
0 & 0 & 0 & -i\\
i & 0 & 0 & 0\\
0 & i & 0 & 0
\end{array}\right], \;\;\;\;
\Gamma_3 = \left[\begin{array}{cccc}
0 & 0 & 0 & 1\\
0 & 0 & 1 & 0\\
0 & 1 & 0 & 0\\
1 & 0 & 0 & 0
\end{array}\right] \\
\Gamma_4 = \left[\begin{array}{cccc}
0 & 0 & 0 & -i\\
0 & 0 & i & 0\\
0 & -i & 0 & 0\\
i & 0 & 0 & 0
\end{array}\right], \;\;\;\;
\Gamma_5 = \left[\begin{array}{cccc}
1 & 0 & 0 & 0\\
0 & 1 & 0 & 0\\
0 & 0 & -1 & 0\\
0 & 0 & 0 & -1
\end{array}\right].
\end{gather*}
Finally, we note that the scaling parameter $\kappa$ in Eq.\ (\ref{eq:4DL}) also serves to ensure that the spectral localizer has consistent units, and thus $\kappa$ has units of energy/distance.

\begin{figure*}
\includegraphics{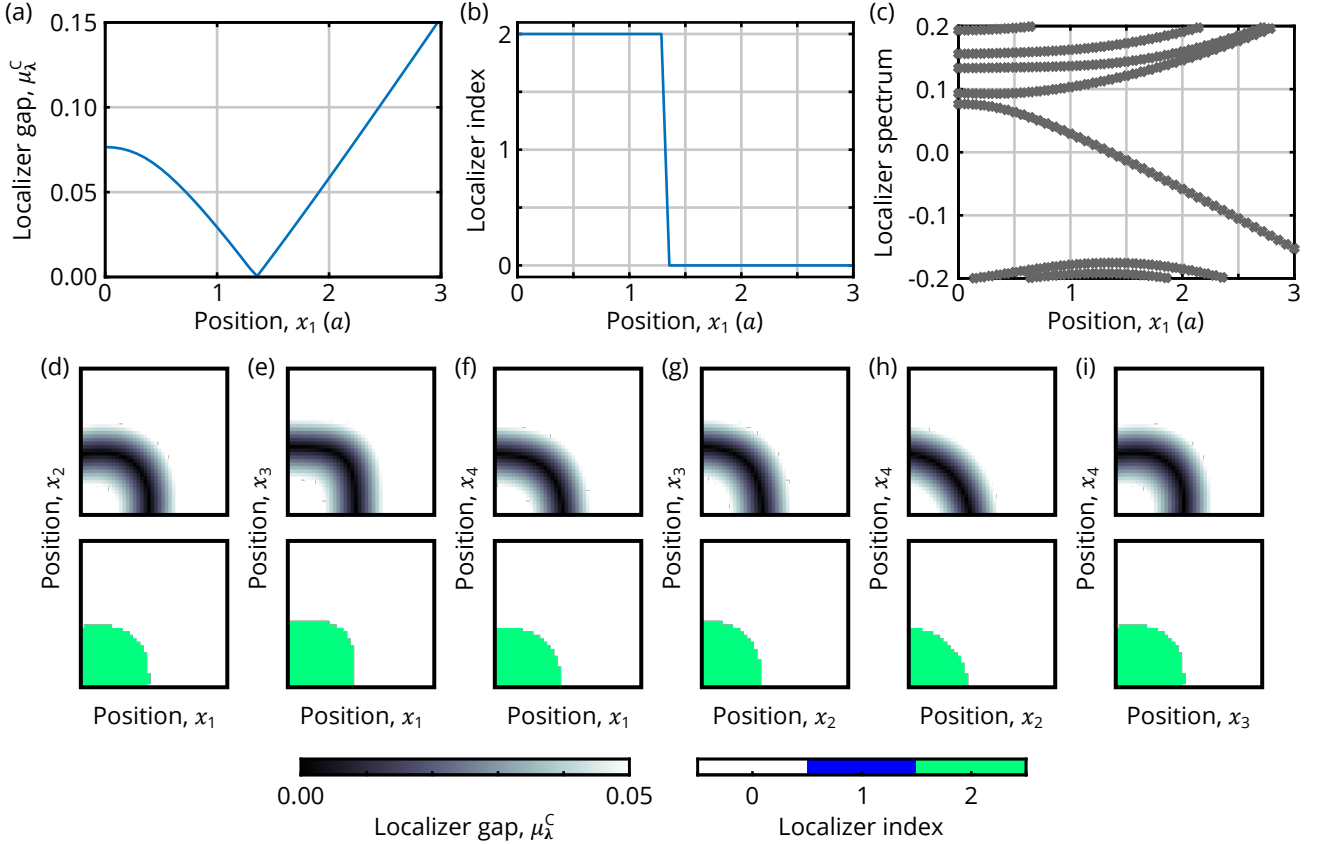}
\caption{(a) Localizer gap $\mu_{\boldsymbol{\lambda}}^{\textrm{C}}$ for a 5-by-5-by-5-by-5 4D class AI lattice whose Hamiltonian is given by Eq.\ (\ref{eq:4DHtot}), with $t_1 = 0.8 t$ and $m = t/2$. Here, only $x_1$ is varied, with $x_2 = x_3 = x_4 = 0$, and $\kappa = 0.1(t/a)$, where $a$ is the lattice constant. (b) Localizer index for the same system. (c) Spectral flow of the 20 eigenvalues of $L_{\boldsymbol{\lambda}}$ closest to zero over the same position variation. (d)-(i) Slices of the localizer gap (top) and localizer index (bottom) over the surfaces $x_i$ and $x_j$, with $i,j = 1,2,3,4$. Any coordinate not shown in a given plot is fixed to zero. The positions shown are varied between $0$ and $3a$. The lattice is centered at the origin.}
\label{fig:AI}
\end{figure*}

For $M,t,t_1 \in \mathbb{R}$, this lattice is in class AI of the Altland-Zirnbauer classification \cite{schnyder2008,kitaev2009,ryu2010topological}, and possesses bosonic time-reversal symmetry $\mathcal{T}^2 = +1$, but no other local symmetries. Direct calculation of the system's Clifford spectrum reveals that it is a 4-dimensional surface that is approximately $S^{4}$ (i.e., a 4D sphereoid), see Fig.\ \ref{fig:AI}d-i. Moreover, the $K$-theory element associated with this lattice's Clifford spectrum is a local marker equivalent for the second Chern number, and can be calculated as
\begin{equation*}
    \textrm{ind}_{\boldsymbol{\lambda}}(X_1,X_2,X_3,X_4,H) = \tfrac{1}{2}\textrm{sig}(L_{\boldsymbol{\lambda}}(X_1,X_2,X_3,X_4,H)). \label{eq:ind4D}
\end{equation*}
For class AI, the time-reversal symmetry fixes this index to always be even. As can be seen in Fig.\ \ref{fig:AI}, inside of the closed 4D surface that forms the Clifford spectrum, the index is seen to be $2$, while outside the index is $0$.


Even with the small system size, the commutators are substantially smaller than in the Clifford matrix examples in Section~\ref{sec:Clifford_of_Clifford}.  We need to normalize things, as none of these matrices have norm one. Numerical estimates tell us
\begin{equation*}
    \left\Vert \left[X_{j},H\right]\right\Vert /(\left\Vert X_{j}\right\Vert \left\Vert H\right\Vert )
\end{equation*}
are approximately equal to $0.29$ for odd $j$ and $0.21$ for even $j$ and that each $X_j$ has norm $2.0$ and finally that $H$ has norm close to $4$.  In order to get non-trivial $K$-theory we have to rescale these, replacing $X_j$ by $(0.1)X_j$. 
For much larger systems size, we have a theorem \cite{LoringSchuba_even_dim_localizer} 
to tell us that the index of the localizer at the origin will equal the second Chern number.  There is substantial numerical evidence that this equality will hold for much smaller system sizes. See, for example,  \cite{loring2015Pseudospectra_topo_ins,loring2019bulk_spectrum_quasicrystal,lozano_schuba2019Half-signature}.

\begin{figure*}
\includegraphics{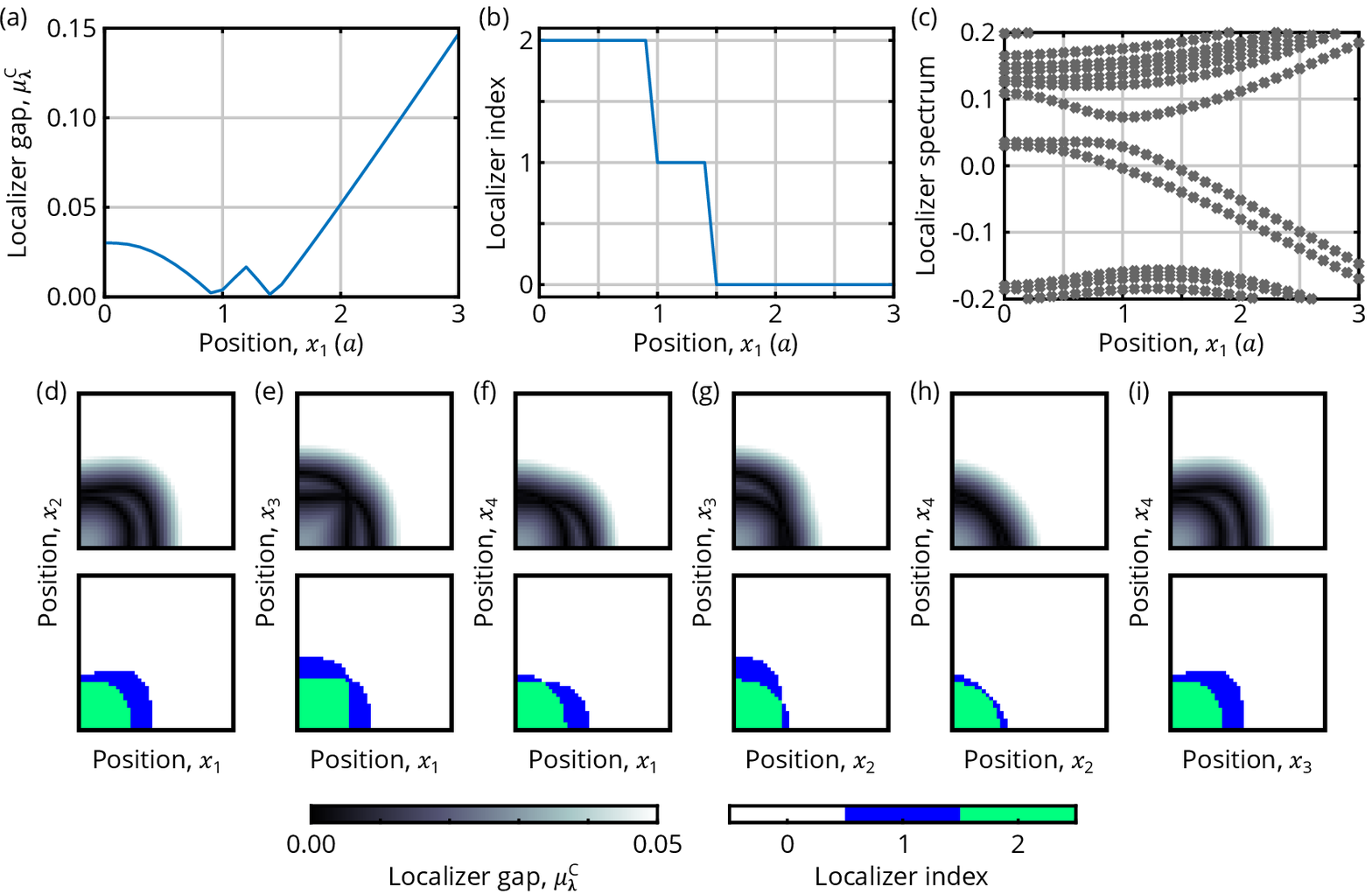}
\caption{(a) Localizer gap $\mu_{\boldsymbol{\lambda}}^{\textrm{C}}$ for a 5-by-5-by-5-by-5 4D class A lattice whose Hamiltonian is given by Eq.\ (\ref{eq:4DHtot}), with $t_1 = 0.8 e^{0.1 i \pi} t$ and $M = t/2$. Here, only $x_1$ is varied, with $x_2 = x_3 = x_4 = 0$, and $\kappa = 0.1(t/a)$, where $a$ is the lattice constant. (b) Localizer index for the same system. (c) Spectral flow of the 20 eigenvalues of $L_{\boldsymbol{\lambda}}$ closest to zero over the same position variation. (d)-(i) Slices of the localizer gap (top) and localizer index (bottom) over the surfaces $x_i$ and $x_j$, with $i,j = 1,2,3,4$. Any coordinate not shown in a given plot is fixed to zero. The positions shown are varied between $0$ and $3a$. The lattice is centered at the origin.}
\label{fig:A}
\end{figure*}

By relaxing the reality of the coupling coefficients and choosing $t_1 \in \mathbb{C}$, the lattice's time-reversal symmetry is broken and it instead falls into class A of the Altland-Zirnbauer classification, which represents systems with no local symmetries. As such, its topological index determined by Eq.\ (\ref{eq:ind4D}) can now be any integer, and odd values of this index are seen in Fig.\ \ref{fig:A}. For this system, the Clifford spectrum is two intersecting 4D spheroids, that merge together as time-reversal symmetry is restored. For choices of $\boldsymbol{\lambda}$ in the interior of both spheroids, the index remains $2$, while the index outside of both surfaces is $0$. However, in between the two surfaces, a region that was inaccessible in the time-reversal symmetric system, the index is $1$.

A system with width of $5$ lattice units is too small to try the methods of \cite{LoringSchuba_even_dim_localizer} to create a ``fuzzy sphere'' with (\ref{eqn:fuzzy_sphere_commutators}) and (\ref{eqn:fuzzy_sphere_square_sum}) both small and
all the matrices normalized to have norm one.  To do this properly, we would want a round sample in place of a square sample, and we would use a larger system.  By keeping the position observables to be of norm one as the system size grows, we would get smaller and smaller commutators.

Instead, we have found strong numerical evidence that the Clifford spectrum for the observables in our small system looks somewhat like a four-sphere.  In particular, it seems that a ray leaving the origin in any direction while staying at energy zero will cross the Clifford spectrum just once for $t_1 \in \mathbb{R}$ (Fig.\ \ref{fig:AI}), or once or twice for $t_1 \in \mathbb{C}$ (Fig.\ \ref{fig:A}). The hope is that someone will see a pattern here, find a method to produce fuzzy four-spheres based on real-symmetric matrices, and prove that the Clifford spectrum of those matrices is a four-sphere.

All of the algorithms necessary to reproduce these numerical results are available on GitHub \cite{software_repo}.

\section{Open problems and future directions}

In the typical application in physics the matrices involved are $D$ matrices $X_j$ that specify position and the Hamiltonian $H$.  In this case, the spectral localizer at zero
is comprised of $H\otimes e_{D+1}$ and $\sum X_j \otimes e_j$.  In the continuum the latter is the Fourier transform of a standard Dirac operator.  The term  involving the Hamiltonian can be seen as a perturbation of the Dirac operator.  In this way, one can deduce the structure of the  spectrum of this perturbed Dirac operator \cite{franca2023obstructions,schulz2022semimetals_ELP,schuba2022Localizer_semimetals}.  

There are, however, many other incompatible observables to which we have want to apply the localizer, such as momentum operators and current operators.  We would like to have the basic theory of 
the Clifford spectrum built up to better support this ongoing research in mathematical physics.  To this end, we present some conjectures.  Some of these may require techniques from geometry or other areas of mathematics outside of operator theory.

There are patterns emerging in the types of spaces that show up as the Clifford spectrum in examples.  For the most part, these patterns have not been explained.   The following conjecture is due to Kisil \cite{Kisil_monogenic_func_calc}.  Kisil sketched a possible proof, but the conjecture seems to be still open. 

\begin{conjecture}
The Clifford spectrum $\Lambda(A_{1},\dots,A_{d})$ is always nonempty, given $A_{1},\dots,A_{d}$ Hermitian matrices in $\boldsymbol{M}_{n}(\mathbb{C})$.
\end{conjecture}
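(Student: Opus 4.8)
\medskip
\noindent\emph{A possible line of attack.}
The plan is to argue by contradiction and read off a topological obstruction from the behaviour of the localizer family at infinity. Suppose $\Lambda(A_{1},\dots,A_{d})=\emptyset$, so that $L_{\boldsymbol{\lambda}}:=L_{\boldsymbol{\lambda}}(A_{1},\dots,A_{d})$ is invertible for every $\boldsymbol{\lambda}\in\mathbb{R}^{d}$. For $d=1$ the statement is the nonemptiness of the ordinary spectrum of $A_{1}$, so assume $d\geq 2$ and, using the Lemma on representations, compute $\Lambda$ with a fixed irreducible representation $\gamma_{1},\dots,\gamma_{d}$ of the Clifford relations, so that $L_{\boldsymbol{\lambda}}=\sum_{j}(A_{j}-\lambda_{j})\otimes\gamma_{j}$ is an honest self-adjoint matrix. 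Writing $\boldsymbol{\lambda}=R\boldsymbol{\omega}$ with $\boldsymbol{\omega}\in S^{d-1}$ one has $R^{-1}L_{R\boldsymbol{\omega}}=R^{-1}\sum_{j}A_{j}\otimes\gamma_{j}-I_{n}\otimes\gamma(\boldsymbol{\omega})$, where $\gamma(\boldsymbol{\omega})=\sum_{j}\omega_{j}\gamma_{j}$, and this converges uniformly in $\boldsymbol{\omega}$ to the self-adjoint unitary $-I_{n}\otimes\gamma(\boldsymbol{\omega})$ as $R\to\infty$. Hence for $R$ large the restriction of $\boldsymbol{\lambda}\mapsto L_{\boldsymbol{\lambda}}$ to the sphere $|\boldsymbol{\lambda}|=R$ is homotopic, through invertibles, to the \emph{Clifford symbol} $\boldsymbol{\omega}\mapsto -I_{n}\otimes\gamma(\boldsymbol{\omega})$; everything then reduces to showing this symbol is topologically nontrivial.

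Next I would turn the hypothesis $\Lambda=\emptyset$ into triviality of that symbol, treating the parities separately. When $d$ is odd, $d-1$ is even and $\widetilde{K}^{0}(S^{d-1})\cong\mathbb{Z}$: invertibility of $L_{\boldsymbol{\lambda}}$ on the closed ball $|\boldsymbol{\lambda}|\leq R$ makes the spectral projection $P_{\boldsymbol{\lambda}}$ onto the positive part of $L_{\boldsymbol{\lambda}}$ a continuous family of projections over that contractible ball, so $\operatorname{Im}P_{\boldsymbol{\lambda}}$ is a trivial vector bundle there, hence trivial on the boundary sphere --- but its restriction to the boundary is, by the homotopy above, isomorphic to the positive-eigenspace bundle of the symbol. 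When $d$ is even I would instead use the $\mathbb{Z}/2$-grading of $\mathcal{C}\ell(d)$ to put $L_{\boldsymbol{\lambda}}$ in block-off-diagonal form with chiral block $C_{\boldsymbol{\lambda}}$, so that $L_{\boldsymbol{\lambda}}$ is invertible exactly when $C_{\boldsymbol{\lambda}}$ is; invertibility of $C_{\boldsymbol{\lambda}}$ on the ball then exhibits $\boldsymbol{\omega}\mapsto C_{R\boldsymbol{\omega}}$ as null-homotopic in $GL$, i.e.\ the off-diagonal part of the symbol is trivial in $\pi_{d-1}(U)\cong\mathbb{Z}$. In both cases $\Lambda=\emptyset$ would force the Clifford symbol to be (stably) trivial.

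The heart of the matter is then that the Clifford symbol is \emph{not} trivial --- the classical fact, going back to Atiyah--Bott--Shapiro, that Clifford multiplication realizes a generator of $\widetilde{K}^{0}(S^{d-1})$ when $d$ is odd (detected by the Chern class $c_{(d-1)/2}$ of a half-spinor eigenbundle, a nonzero multiple of the fundamental class) and a generator of $\widetilde{K}^{-1}(S^{d-1})\cong\pi_{d-1}(U)$ when $d$ is even (the Clifford clutching function builds the Bott generator over $S^{d}$); for $d=2$ this is just the winding number $n$ of $z\mapsto\det((A_{1}+iA_{2})-zI)$. Since the actual symbol is the direct sum of $n$ copies of the irreducible one, its class is $n$ times a generator, which is nonzero because $\mathbb{Z}$ is torsion-free; combined with the previous paragraph this contradicts $\Lambda=\emptyset$ and yields $\Lambda(A_{1},\dots,A_{d})\neq\emptyset$.

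The main obstacle, I expect, is the glue rather than any single ingredient: checking that the positive spectral projection (resp.\ the chiral block) depends continuously on $\boldsymbol{\lambda}$ wherever $L_{\boldsymbol{\lambda}}$ is invertible, that the convergence to the symbol at infinity is uniform enough to furnish an honest homotopy on a large sphere, and --- most delicately --- pinning down that the multiplicity-$n$ Clifford symbol really carries the expected nonzero $K$-theory class, with the sign conventions of the section on representations of the Clifford relations matching up correctly across the even/odd divide. It is worth recording that this is only one possible route: Kisil's own sketch instead proceeds via the monogenic functional calculus and a Liouville-type argument, and whatever gap has kept the conjecture open presumably lies precisely in making one of these topological (or analytic) inputs fully rigorous in the finite-matrix setting.
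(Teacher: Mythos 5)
This statement is not proved in the paper at all: it appears there as a \emph{Conjecture}, attributed to Kisil, with the authors remarking that Kisil's own sketch has never been made rigorous and that the question remains open. There is therefore no proof of record for your attempt to be compared against. What I can do is assess the strategy you propose on its own merits.

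Your outline is the natural index-theoretic obstruction argument, and the skeleton is sound. If $\Lambda=\emptyset$ the localizer $L_{\boldsymbol{\lambda}}$ is invertible on all of $\mathbb{R}^{d}$; the Riesz spectral projection gives a continuous bundle of positive eigenspaces over a contractible space, forcing its restriction to a large sphere $|\boldsymbol{\lambda}|=R$ to be trivial (and, for $d$ even, forcing the chiral block to be a nullhomotopic $GL$-valued map on that sphere). The deformation to the symbol is also genuinely fine: since $-I_{n}\otimes\gamma(\boldsymbol{\omega})$ is a self-adjoint unitary with spectral gap $1$, once $R>\sum_{j}\lVert A_{j}\rVert$ the straight-line homotopy from $R^{-1}L_{R\boldsymbol{\omega}}$ to $-I_{n}\otimes\gamma(\boldsymbol{\omega})$ stays in the invertible self-adjoints, so the two families have isomorphic positive-eigenspace bundles over $S^{d-1}$. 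The part you are right to flag as the crux is the Atiyah--Bott--Shapiro input: that for an \emph{irreducible} representation the symbol $\boldsymbol{\omega}\mapsto\gamma(\boldsymbol{\omega})$ gives a generator of $\widetilde{K}^{0}(S^{d-1})$ ($d$ odd) or of $\pi_{d-1}(U)\cong\widetilde{K}^{-1}(S^{d-1})$ ($d$ even), so that $n$ copies is $n$ times a generator and hence nonzero in a torsion-free group. This is classical, but you must be careful to use an irreducible $\gamma_{j}$ (a reducible one combining both irreducibles for odd $d$ would cancel the class and the argument would collapse, consistently with the paper's Lemma that the \emph{spectrum} is representation-independent while the \emph{signature/index} is not), and careful about the two inequivalent irreducibles for odd $d$ merely flipping the sign. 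It is also worth noticing that the paper's own Section on $\Lambda(\gamma_{1},\dots,\gamma_{d})$, proving $\mathrm{sig}(L_{\boldsymbol{0}}(\gamma_{1},\dots,\gamma_{d}))\neq 0$, verifies precisely the special case $A_{j}=\gamma_{j}$ of the nontriviality you need at the level of the symbol. I do not see a fatal error in your route, but given that the authors explicitly flag the conjecture as open, you should not regard this sketch as settling it until each of the topological steps --- especially the ABS identification and its interaction with the choice of representation --- is written out in full; the authors' caution suggests precisely such a gap has historically been the sticking point.
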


This is true for $d=2$, and in the commutative case, since in each case the Clifford spectrum is equal to a standard form of spectrum \cite{loring2015Pseudospectra_topo_ins}.  

In all the examples found so far, the Clifford spectrum of $d$ matrices in $\boldsymbol{M}_{n}(\mathbb{C})$ either has cardinality $n$ or less or contains a two-dimensional space.  

\begin{conjecture}
If the Clifford spectrum $\Lambda(A_{1},\dots,A_{d})$ for Hermitian matrices in $\boldsymbol{M}_{n}(\mathbb{C})$ is finite then it has cardinality at most $n$.
\end{conjecture}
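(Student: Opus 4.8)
The plan is to turn $\Lambda$ into the zero set of one polynomial and then import the ordinary eigenvalue count. By the Lemma we may compute $\Lambda(A_{1},\dots,A_{d})$ in an irreducible representation $\gamma_{1},\dots,\gamma_{d}$ of the Clifford relations. Take $d$ even first, and let $\omega_{c}$ be the chirality element (a traceless Hermitian square root of $1$ anticommuting with every $\gamma_{j}$). In an eigenbasis of $I_{n}\otimes\omega_{c}$ the matrix $\sum_{j}(A_{j}-\lambda_{j})\otimes\gamma_{j}$ becomes
\begin{equation*}
\begin{bmatrix}0 & D_{\boldsymbol{\lambda}}\\ D_{\boldsymbol{\lambda}}^{*} & 0\end{bmatrix},\qquad D_{\boldsymbol{\lambda}}=D_{0}-\sum_{j}\lambda_{j}\,(I\otimes V_{j})\in\boldsymbol{M}_{n2^{d/2-1}}(\mathbb{C}),
\end{equation*}
where $V_{j}\in\boldsymbol{M}_{2^{d/2-1}}(\mathbb{C})$ is the off-diagonal block of $\gamma_{j}$ and $D_{0}=\sum_{j}A_{j}\otimes V_{j}$; thus $\boldsymbol{\lambda}\in\Lambda$ exactly when $P(\boldsymbol{\lambda})=\det D_{\boldsymbol{\lambda}}=0$. (When $d$ is odd the same thing happens, with the Hermitian matrix $\sum_{j}(A_{j}-\lambda_{j})\otimes\gamma_{j}$ itself in the role of $D_{\boldsymbol{\lambda}}$.) From $\gamma_{j}\gamma_{k}+\gamma_{k}\gamma_{j}=2\delta_{jk}$ one gets $V_{j}^{*}V_{k}+V_{k}^{*}V_{j}=2\delta_{jk}$, hence $\Vert\sum_{j}\mu_{j}(I\otimes V_{j})v\Vert=|\boldsymbol{\mu}|\,\Vert v\Vert$ for all $\boldsymbol{\mu}\in\mathbb{R}^{d}$ and all $v$. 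Two immediate consequences: each nonzero $v$ lies in $\ker D_{\boldsymbol{\lambda}}$ for at most one $\boldsymbol{\lambda}$, so the kernels attached to distinct points of $\Lambda$ meet only in $0$; and, writing $\boldsymbol{\lambda}=r\omega$, the matrix $D_{r\omega}=D_{0}-r(I\otimes U_{\omega})$ with $U_{\omega}=\sum_{j}\omega_{j}V_{j}$ unitary is singular only for $r$ an eigenvalue of $(I\otimes U_{\omega}^{*})D_{0}$, so any line through the origin meets $\Lambda$ in at most $n2^{d/2-1}$ points.

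The case $d=2$ is already the conjecture in miniature: there $V_{1},V_{2}$ are orthogonal complex units, and after a unitary change of representation $D_{\boldsymbol{\lambda}}=(A_{1}+iA_{2})-(\lambda_{1}+i\lambda_{2})I$, so $\Lambda=\{(x,y):x+iy\in\mathrm{spec}(A_{1}+iA_{2})\}$ has at most $n$ points; $d=3$ reduces to the same fact. So the bound $n$ is just the ordinary geometric-multiplicity count, and the conjecture asserts that it persists in higher $d$.

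For general $d$ I would try to induct by removing two Clifford generators at a time via the doubling $\alpha_{j}=\gamma_{j}\otimes\sigma_{x}$ $(j\le d)$, $\alpha_{d+1}=I\otimes\sigma_{y}$, $\alpha_{d+2}=I\otimes\sigma_{z}$ from the section on conventions, under which the $(d{+}2)$-variable localizer acquires a $2\times2$ block form built from the $d$-variable one, letting one compare the two zero sets. What this leaves is a purely linear-algebraic claim about the pencil $D_{\boldsymbol{\lambda}}=D_{0}-\sum_{j}\lambda_{j}(I\otimes V_{j})$: if $\Lambda$ is finite then (i) the family $\{\ker D_{\boldsymbol{\lambda}}:\boldsymbol{\lambda}\in\Lambda\}$ is linearly independent, not merely pairwise transverse, and (ii) $\dim\ker D_{\boldsymbol{\lambda}}\ge 2^{d/2-1}$ at every point of $\Lambda$. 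Granting (i) and (ii), $2^{d/2-1}|\Lambda|\le\sum_{\boldsymbol{\lambda}}\dim\ker D_{\boldsymbol{\lambda}}\le n2^{d/2-1}$, so $|\Lambda|\le n$. For (i) one would push the observation above: $D_{0}$ maps $\ker D_{\boldsymbol{\lambda}}$ onto $\sum_{j}\lambda_{j}(I\otimes V_{j})(\ker D_{\boldsymbol{\lambda}})$, a subspace whose position is rigidly fixed by the norm identity. For (ii) one would use that for $d\ge3$ finiteness of $\Lambda$ is strongly non-generic---generically $\Lambda$ is a $(d-1)$- or $(d-2)$-dimensional surface along which the kernels are one-dimensional, so a finite $\Lambda$ must sit at correspondingly degenerate zeros of $P$.

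Step (ii), and to a lesser extent (i), is where the argument stops being formal, and this is the main obstacle. Even for $d=4$ the pencil $D_{0}-\sum_{j}\lambda_{j}(I\otimes V_{j})$ is a (representation-theoretically constrained) quaternionic matrix pencil, for which already the coarse dichotomy ``finitely many singular $\boldsymbol{\lambda}$, or a round $2$-sphere's worth'' is delicate; and for $d\ge6$ the pertinent real Clifford algebra has zero divisors, so ``a kernel is a submodule, hence of dimension a multiple of $2^{d/2-1}$'' can fail, and one is thrown back on the single real polynomial $\det L_{\boldsymbol{\lambda}}=\pm|P(\boldsymbol{\lambda})|^{2}$, whose leading form is the positive-definite $\pm\bigl(\sum_{j}\lambda_{j}^{2}\bigr)^{N}$, $N=n2^{d/2-1}$. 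Extracting the sharp count $n$ from that positivity---rather than a B\'ezout-type bound growing with $d$---seems to need a genuinely new idea, which is no doubt why the statement is only a conjecture.
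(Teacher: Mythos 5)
This statement is one of the paper's open conjectures: the paper offers no proof of it, so there is nothing to compare your argument against, and indeed your own write-up concedes at the end that it is not a proof. What you have is a reduction, not an argument. The sound parts are worth noting: the off-diagonal block decomposition of the localizer in the even case, the identity $\Vert\sum_j\mu_j(I\otimes V_j)v\Vert=|\boldsymbol{\mu}|\,\Vert v\Vert$, its corollary that kernels attached to distinct points of $\Lambda$ intersect only in $0$, and the fact that any line through the origin meets $\Lambda$ in at most $n2^{d/2-1}$ points are all correct. But the last bound grows with $d$, and the whole weight of the conjecture is pushed onto your claims (i) and (ii), neither of which is established. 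Pairwise trivial intersection of the kernels does not give linear independence, and the remark that the position of $D_0(\ker D_{\boldsymbol{\lambda}})$ is ``rigidly fixed by the norm identity'' is not an argument. Claim (ii) --- that at an isolated point of $\Lambda$ the kernel has dimension at least $2^{d/2-1}$ --- is the crux, and the genericity heuristic offered for it (``finiteness is non-generic, so the zero must be correspondingly degenerate'') proves nothing: a non-generic determinantal variety can perfectly well have isolated singular points with one-dimensional kernel unless some additional structure (e.g.\ a module structure over the Clifford algebra, which you yourself note fails for larger $d$) forces otherwise.

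Two smaller points. The assertion that ``$d=3$ reduces to the same fact'' as $d=2$ is unsupported: for a triple the localizer is a genuine $2n\times 2n$ Hermitian pencil, and the finite-spectrum bound for $d=3$ is not known to follow from the $d=2$ case --- it is part of what the conjecture asks. And in the odd case the relevant matrix has size $n2^{(d-1)/2}$, so your counting scheme would need the correspondingly larger kernel bound there, which you do not address. In short, the proposal correctly identifies where the difficulty lives, and your closing admission that a genuinely new idea is needed is accurate; as it stands the statement remains exactly what the paper says it is, a conjecture.
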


\begin{conjecture}
The Clifford spectrum $\Lambda(A_{1},\dots,A_{d})$ of Hermitian matrices in $\boldsymbol{M}_{n}(\mathbb{C})$ is never a one-manifold.
\end{conjecture}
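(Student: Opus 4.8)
My plan is to argue by contradiction, assuming $\Lambda=\Lambda(A_{1},\dots,A_{d})$ is a nonempty one-manifold. The cases $d=1,2$ are immediate: there $\Lambda$ is the ordinary spectrum of $A_{1}$, respectively (identified with) the spectrum of $A_{1}+iA_{2}$, hence a finite set, and a finite set is not a one-manifold. So assume $d\ge 3$. Since $\boldsymbol{\lambda}\mapsto s_{\min}(L_{\boldsymbol{\lambda}})$ is Lipschitz and $L_{\boldsymbol{\lambda}}$ is invertible for $|\boldsymbol{\lambda}|$ large, $\Lambda$ is compact; being a one-manifold it is a closed set of topological dimension $1\le d-2$, so $\mathbb{R}^{d}\setminus\Lambda$ is connected.

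Fix an irreducible representation $\gamma_{1},\dots,\gamma_{d}$ of the Clifford relations, so that $L_{\boldsymbol{\lambda}}=L_{0}-\sum_{j}\lambda_{j}(I\otimes\gamma_{j})$ is a genuine Hermitian matrix of some even size $N$, and set $p(\boldsymbol{\lambda})=\det L_{\boldsymbol{\lambda}}$, a real polynomial whose zero set is exactly $\Lambda$ (singularity of $L_{\boldsymbol{\lambda}}$ does not depend on the representation, by the lemma on $s_{\min}$ above). The number of negative eigenvalues of $L_{\boldsymbol{\lambda}}$ can change only when $\boldsymbol{\lambda}$ crosses $\Lambda$, so it is constant on the connected set $\mathbb{R}^{d}\setminus\Lambda$; since $\mathrm{sig}(L_{\boldsymbol{\lambda}})=0$ for $|\boldsymbol{\lambda}|$ large, this number is identically $N/2$ off $\Lambda$. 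Hence $\mathrm{sig}(L_{\boldsymbol{\lambda}})\equiv 0$ on the resolvent and $\widetilde{p}:=(-1)^{N/2}p$ satisfies $\widetilde{p}\ge 0$ with $\widetilde{p}^{-1}(0)=\Lambda$. (Consistently, the top-degree part of $p$ is $\det\!\big(-\sum_{j}\lambda_{j}(I\otimes\gamma_{j})\big)$, whose square is $\det(|\boldsymbol{\lambda}|^{2}I)=|\boldsymbol{\lambda}|^{2N}$, so the leading form of $\widetilde{p}$ equals $|\boldsymbol{\lambda}|^{N}$.)

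Now localize. Fix $\boldsymbol{\lambda}_{0}\in\Lambda$ at which $\ker L_{\boldsymbol{\lambda}_{0}}=\mathbb{C}v$ is one-dimensional; points where the kernel is larger lie in the corank-$\ge 2$ locus, of codimension $\ge 4$ in the Hermitian matrices, so after discarding a lower-dimensional subset one may assume every point is of this type (the degenerate points must be treated separately, or shown unable to fill a component). The vanishing eigenvalue branch $\ell(\boldsymbol{\lambda})$ is then real-analytic near $\boldsymbol{\lambda}_{0}$, $\Lambda$ coincides near $\boldsymbol{\lambda}_{0}$ with $\{\ell=0\}$, and $\ell$ is semidefinite near $\boldsymbol{\lambda}_{0}$, since $p$ factors there as $\ell$ times a non-vanishing analytic function while having constant sign off $\Lambda$. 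If $\nabla\ell(\boldsymbol{\lambda}_{0})\neq 0$ then $\{\ell=0\}$ is a smooth hypersurface near $\boldsymbol{\lambda}_{0}$, impossible inside a one-manifold when $d\ge 3$; so first-order perturbation theory (Hellmann--Feynman) forces $\langle v,(I\otimes\gamma_{j})v\rangle=0$ for all $j$. Consequently the Hessian of $\ell$ at $\boldsymbol{\lambda}_{0}$, computed by second-order perturbation theory, is the pullback of the indefinite form $w\mapsto -2\langle w,L_{\boldsymbol{\lambda}_{0}}^{\dagger}w\rangle$ on $v^{\perp}$ under the real-linear injection $\boldsymbol{y}\mapsto(I\otimes\gamma(\boldsymbol{y}))v$, $\gamma(\boldsymbol{y})=\sum_{j}y_{j}\gamma_{j}$ (injective because $\|(I\otimes\gamma(\boldsymbol{y}))v\|=|\boldsymbol{y}|$). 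This form must be semidefinite, but not definite, since otherwise $\boldsymbol{\lambda}_{0}$ would be an isolated zero of $\ell$ rather than a point on a positive-dimensional component; by Cauchy--Schwarz for the semidefinite form one then extracts, for a nonzero kernel direction $\boldsymbol{\tau}$, the rigid relations $L_{\boldsymbol{\lambda}_{0}}^{\dagger}(I\otimes\gamma(\boldsymbol{\tau}))v\perp(I\otimes\gamma_{j})v$ for all $j$, tying $v$, the $\gamma_{j}$, and the positive and negative eigenspaces of $L_{\boldsymbol{\lambda}_{0}}$ together very tightly.

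The remaining step is to turn these constraints into a contradiction, and this is where I expect the real difficulty to lie. What I would attempt is to show that at such a $\boldsymbol{\lambda}_{0}$ one can in fact move along $\Lambda$ in at least two independent directions, so that $\Lambda$ is locally at least two-dimensional; the natural tool is the squared-localizer identity (a one-line computation)
\begin{equation*}
L_{\boldsymbol{\lambda}}^{2}=\left(\sum_{j}(A_{j}-\lambda_{j})^{2}\right)\otimes I+\sum_{j<k}[A_{j},A_{k}]\otimes\gamma_{j}\gamma_{k},
\end{equation*}
which expresses ``$\boldsymbol{\lambda}\in\Lambda$'' as the singularity of a Hermitian matrix depending on $\boldsymbol{\lambda}$ only through the very rigid shifted sum of squares $\sum_{j}(A_{j}-\lambda_{j})^{2}$, the remaining term being independent of $\boldsymbol{\lambda}$; the aim would be to prove that the zero set of the smallest eigenvalue of such a family can never be one-dimensional. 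The obstacle is that the constraints assembled above, while genuinely necessary, are not by themselves contradictory: they are, for instance, also satisfied by polynomials such as $(|\boldsymbol{\lambda}|^{2}-1)^{2}+\lambda_{d}^{2}$, whose zero set is a circle. So the heart of the matter is to exploit the anticommutation of the $\gamma_{j}$ — equivalently, the shifted-sum-of-squares structure above — to exclude this, and it is presumably the elusiveness of that last input that keeps the statement a conjecture.
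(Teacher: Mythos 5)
This statement is one of the paper's open conjectures: the paper gives no proof of it, so there is no argument of the authors' to compare yours against, and what you have written does not close the gap either --- as you yourself acknowledge in the final paragraph. The sound portion of your attempt (the finiteness of $\Lambda$ for $d=1,2$; compactness of $\Lambda$; the fact that a compact set of dimension at most $d-2$ does not separate $\mathbb{R}^{d}$ for $d\geq3$; the constancy of the signature on the connected resolvent set and hence the sign-definiteness of $\det L_{\boldsymbol{\lambda}}$; the first- and second-order perturbation constraints $\langle v,(I\otimes\gamma_{j})v\rangle=0$ and the semidefinite-but-not-definite Hessian pulled back through $\boldsymbol{y}\mapsto(I\otimes\gamma(\boldsymbol{y}))v$) yields necessary conditions at a point of $\Lambda$ with one-dimensional kernel, but these conditions are not contradictory: your own test polynomial $(|\boldsymbol{\lambda}|^{2}-1)^{2}+\lambda_{d}^{2}$, whose zero set is a circle, satisfies all of them. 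The entire content of the conjecture is precisely the step you defer --- showing that the anticommutation structure, equivalently the identity $L_{\boldsymbol{\lambda}}^{2}=\bigl(\sum_{j}(A_{j}-\lambda_{j})^{2}\bigr)\otimes I+\sum_{j<k}[A_{j},A_{k}]\otimes\gamma_{j}\gamma_{k}$, rules out a one-dimensional zero set --- and no argument for it is offered.

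Two further points in the partial argument need repair even as a reduction. First, you discard the locus where $\ker L_{\boldsymbol{\lambda}}$ has dimension at least $2$ by a codimension-four count; that count applies to generic families of Hermitian matrices, whereas here the affine family $\boldsymbol{\lambda}\mapsto L_{\boldsymbol{\lambda}}(A_{1},\dots,A_{d})$ is fixed by the given tuple, so nothing prevents an entire component of $\Lambda$ from consisting of higher-corank points (indeed examples in the paper, such as the Clifford spectrum of the even Clifford generators, are exactly loci of extra degeneracy), and ``treated separately'' is not yet a treatment. Second, the claim that a semidefinite-but-not-definite Hessian together with the Cauchy--Schwarz relations ``ties things together very tightly'' is not quantified; as the circle example shows, degenerate critical points of a nonnegative analytic function can perfectly well have one-dimensional zero sets, so any successful argument must use the specific algebraic structure of the localizer rather than only the local real-analytic geometry you have extracted. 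In short: the statement remains a conjecture, and your proposal should be presented as a program with identified obstacles, not as a proof.
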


While we have not seen an example where the Clifford spectrum is a  non-orientable manifold, we believe such an example can exist.   Indeed, there may be matrix models based on recent physical experiments \cite{Li_acoustic_moebius2022,wang2023Moebius_optical} that lead to this.

We end with a more open-ended challenge.

\begin{problem}
Define a fuzzy four-sphere, similar to the usual fuzzy two-sphere, that can be generated by five almost commuting real-symmetric matrices, with arbitrarily small commutators possible.  This should have Clifford spectrum equal to the standard unit four-sphere and at points inside this sphere the
localizer should have nonzero signature.
\end{problem}

\rule[0.5ex]{1\columnwidth}{1pt}

\section*{Acknowledgements}

A.C.\ and T.L.\ acknowledge support from the Laboratory Directed Research and Development program at Sandia National Laboratories. T.L.\ acknowledges support from the National Science Foundation, grant DMS-2110398. This work was performed, in part, at the Center for Integrated Nanotechnologies, an Office of Science User Facility operated for the U.S. Department of Energy (DOE) Office of Science. Sandia National Laboratories is a multimission laboratory managed and operated by National Technology \& Engineering Solutions of Sandia, LLC, a wholly owned subsidiary of Honeywell International, Inc., for the U.S.\ DOE's National Nuclear Security Administration under contract DE-NA-0003525. The views expressed in the article do not necessarily represent the views of the U.S.\ DOE or the United States Government.

\end{document}